% 2026/06/19
%%% The document is set to work with just pdf figures and pdflatex.

\documentclass{amsart}
\usepackage{amsmath}
\usepackage{amssymb} 
\usepackage{amscd} 
\usepackage{graphicx}   %\usepackage[dvipdfmx]{graphicx}
\usepackage{xcolor}
\usepackage{marvosym}
\usepackage{float}
\usepackage{ascmac}
\usepackage{wrapfig}
\usepackage{epsfig}
\usepackage{tikz}
\usepackage[all]{xy}
\usepackage{hyperref}

\hypersetup{
	colorlinks=true,
	linktoc=all,
    	linkcolor={red!50!black},
   	citecolor={blue!50!black},
  	urlcolor={blue!80!black}
	} %%% This tones down the bright green hyperlink box and its other colors...

\allowdisplaybreaks % To make pagebreaks in equations

\usepackage{booktabs} %for nice tables

\newtheorem{theorem}{Theorem}[section]
\newtheorem{lemma}[theorem]{Lemma}

\newtheorem{proposition}[theorem]{Proposition}
\newtheorem{corollary}[theorem]{Corollary}
\newtheorem{claim}[theorem]{Claim}

\newtheorem{remark}[theorem]{Remark}

\theoremstyle{definition}
\newtheorem{definition}[theorem]{Definition}
\newtheorem{example}[theorem]{Example}

\numberwithin{equation}{section}
\numberwithin{figure}{section}
\numberwithin{table}{section}

\renewcommand{\labelenumi}{ $(\arabic{enumi})$ }

%%%%%%%%%%%%%%%%%%
%\usepackage{refcheck}
%\nocite{*}
%%%%%%%%%%%%%%%%%%

\setcounter{tocdepth}{2} 

\begin{document}
\baselineskip 14pt

\title[Every non-trivial knot group is fully residually perfect]{Every non-trivial knot group is\\ fully residually perfect}

\author[T. Ito]{Tetsuya Ito}
\address{Department of Mathematics, Kyoto University, Kyoto 606-8502, Japan}
\email{tetitoh@math.kyoto-u.ac.jp}
\thanks{TI has been partially supported by JSPS KAKENHI Grant Number JP21H04428 and JP23K03110.}

\author[K. Motegi]{Kimihiko Motegi}
\address{Department of Mathematics, Nihon University, 
3-25-40 Sakurajosui, Setagaya-ku, 
Tokyo 156--8550, Japan}
\email{motegi.kimihiko@nihon-u.ac.jp}
\thanks{KM has been partially supported by JSPS KAKENHI Grant Number JP25K07018, JP21H04428, JP23K03110, JP23K20791, JP26K00606 and Joint Research Grant of Institute of Natural Sciences at Nihon University for 2026}

\author[M. Teragaito]{Masakazu Teragaito}
\address{Department of Mathematics and Mathematics Education, Hiroshima University, 
1-1-1 Kagamiyama, Higashi-Hiroshima, 739--8524, Japan}
\email{teragai@hiroshima-u.ac.jp}
\thanks{MT has been partially supported by JSPS KAKENHI Grant Number JP20K03587 and JP25K07004.}

\begin{abstract}
Given a class $\mathcal{P}$ of groups we say that a group $G$ is {\em fully residually}  $\mathcal{P}$ if 
for any finite subset $F$ of $G$, 
there exists an epimorphism from $G$ to a group in $\mathcal{P}$ 
which is injective on $F$. 
It is known that any non-trivial knot group is fully residually finite, but not residually free.
For hyperbolic knots, its knot group is fully residually closed hyperbolic $3$--manifold group, and such a group is fully residually simple. 
In this article, 
we show that every non-trivial knot group is fully residually perfect, closed $3$--manifold group.  
\end{abstract}

\maketitle

{
\renewcommand{\thefootnote}{}
\footnotetext{2020 \textit{Mathematics Subject Classification.}
Primary 57K10, 57M05, 20E26, Secondary 20F06
\footnotetext{ \textit{Key words and phrases.}
fundamental group, knot group, fully residually perfect, Dehn filling, graph of groups, small cancellation}
}

\setcounter{tocdepth}{1}
\tableofcontents

\section{Introduction}
\label{introduction}
Given a class $\mathcal{P}$ of groups we say that a group $G$ is {\em residually}  $\mathcal{P}$ if 
for a given non-trivial element $g \in G$, 
there exists an epimorphism $\varphi$ from $G$ to a group in $\mathcal{P}$ such that 
$\varphi(g) \ne 1$. 
For instance, 
when $\mathcal{P}$ is the class of finite groups, $G$ is said to be {\em residually finite}. 
More generally, $G$ is said to be {\em fully residually} $\mathcal{P}$ if for every finite subset $F$ of $G$, 
there exists an epimorphism $\varphi$ from $G$ to a group in $\mathcal{P}$ such that 
$\varphi$ is injective on $F$. 

For a non-trivial knot $K$ in the $3$--sphere $S^3$, 
the fundamental group of its exterior is called the \textit{knot group} of $K$, which we denote by $G(K)$. 
It is known that every knot group is residually finite \cite{Hem_residual_finite}, and 
it immediately implies that every knot group is fully residually finite; see \cite[Proposition~9.2]{IMT_realization} for instance. 
It is also known that every non-trivial knot group cannot be residually free \cite{Wilton}. 

This article is motivated by the following theorem, which is explicitly stated in Aschenbrenner--Friedl--Wilton's book 
\cite[p.66 E.5]{AFW}. 

\begin{theorem}[\cite{AFW}]
\label{hyp_fully}
Let $K$ be a hyperbolic knot.   
Then its knot group is fully residually closed hyperbolic $3$--manifold group. 
\end{theorem}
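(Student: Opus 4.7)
The plan is to combine Thurston's hyperbolic Dehn surgery theorem with the algebraic convergence of holonomy representations of the filled manifolds. First, I would make a standard reduction: for a finite subset $F \subset G(K)$, an epimorphism $\varphi \colon G(K) \to H$ is injective on $F$ if and only if $\varphi(f_i f_j^{-1}) \ne 1$ for every pair of distinct elements $f_i, f_j \in F$. Setting $F' = \{f_i f_j^{-1} : f_i \ne f_j \in F\} \subset G(K) \setminus \{1\}$, it is enough to produce a closed hyperbolic $3$-manifold $M$ and an epimorphism $\varphi \colon G(K) \twoheadrightarrow \pi_1(M)$ with $F' \cap \ker\varphi = \emptyset$.

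The natural source of such epimorphisms is Dehn filling. For each slope $\alpha$ on $\partial E(K)$, let $M_\alpha = E(K)(\alpha)$ denote the corresponding filled manifold; the inclusion $E(K) \hookrightarrow M_\alpha$ induces a surjection $\varphi_\alpha \colon G(K) \twoheadrightarrow \pi_1(M_\alpha)$ whose kernel is the normal closure of the filling slope. Since $K$ is hyperbolic, Thurston's hyperbolic Dehn surgery theorem ensures that $M_\alpha$ is a closed hyperbolic $3$-manifold for all but finitely many slopes $\alpha$.

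The key technical step is to show that, for each fixed $\gamma \in G(K) \setminus \{1\}$, only finitely many slopes $\alpha$ can satisfy $\gamma \in \ker\varphi_\alpha$. To do this, I would use the discrete faithful holonomy $\rho_\infty \colon G(K) \to \mathrm{PSL}_2(\mathbb{C})$ of the complete hyperbolic structure on $E(K)$, together with the hyperbolic holonomies $\rho_\alpha \colon \pi_1(M_\alpha) \to \mathrm{PSL}_2(\mathbb{C})$ of the filled manifolds. Thurston's deformation theory gives that, as $\alpha \to \infty$, the composed representations $\rho_\alpha \circ \varphi_\alpha$ converge algebraically to $\rho_\infty$. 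Because $\rho_\infty$ is faithful, $\rho_\infty(\gamma) \ne I$, and hence $\rho_\alpha(\varphi_\alpha(\gamma)) \ne I$ for cofinitely many $\alpha$; in particular $\varphi_\alpha(\gamma) \ne 1$ for such $\alpha$.

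Applying this to each of the finitely many elements of $F'$ and intersecting with the cofinite set of hyperbolic slopes yields a cofinite set of $\alpha$ for which $M_\alpha$ is closed hyperbolic and $\varphi_\alpha$ is injective on $F$; any such $\alpha$ completes the proof. The principal obstacle is precisely the convergence step: one must invoke Thurston's hyperbolic Dehn surgery theorem to obtain both the hyperbolicity of $M_\alpha$ and the algebraic convergence $\rho_\alpha \circ \varphi_\alpha \to \rho_\infty$, and then observe the elementary but essential fact that algebraic convergence in $\mathrm{PSL}_2(\mathbb{C})$ preserves non-triviality of a fixed element in the limit.
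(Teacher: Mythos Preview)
Your argument is correct and follows the same overall architecture as the paper: reduce ``injective on $F$'' to ``nonvanishing on a finite set of nontrivial elements'' (this is Proposition~\ref{fully_residual_P}), then show that each fixed nontrivial $\gamma\in G(K)$ survives all but finitely many Dehn fillings, and intersect with the cofinite set of hyperbolic fillings supplied by Thurston. The paper deduces Theorem~\ref{hyp_fully} from Theorem~\ref{GMO}, which it attributes to Groves--Manning and Osin via relatively hyperbolic Dehn filling; you instead establish the ``survives cofinitely many fillings'' step directly from the algebraic convergence of the deformed holonomies $\rho_{p,q}\to\rho_\infty$. This is exactly the alternative route the paper itself develops in Subsection~\ref{outer_piece_hyperbolic} (see the discussion preceding and the proof of Proposition~\ref{filling_X_hyperbolic}, and Remark~\ref{K_hyperbolic}): continuity of traces under Thurston's deformation forces $\rho_{p,q}(\gamma)\neq 1$ once $|p|+|q|$ is large. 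So your proof is not merely correct but is essentially the paper's own in-house argument for the hyperbolic case, just packaged slightly differently (you phrase it as algebraic convergence in $\mathrm{PSL}_2(\mathbb{C})$ rather than continuity of trace); the Groves--Manning/Osin citation buys a purely group-theoretic proof that generalises beyond the geometric setting, while your approach is more elementary in the one-cusped hyperbolic case.
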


As mentioned there, this theorem is a direct consequence of a more precise result
below obtained by Groves-Manning \cite[Theorem~9.7]{GM} and Osin \cite[Theorem~1]{Osin} independently; 
see also \cite{IchiharaMT}. 

\begin{theorem}[\cite{GM,Osin,IchiharaMT}]
\label{GMO}
Let $K$ be a hyperbolic knot.   
Then every non-trivial element remains non-trivial for all but finitely many Dehn fillings. 
\end{theorem}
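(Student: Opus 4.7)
The plan is to reduce to two cases based on whether $g \in G(K)$ is conjugate into the peripheral subgroup $P = \pi_1(\partial M_K) \cong \mathbb{Z}^2$, and to apply Thurston's hyperbolic Dehn surgery theorem together with the Dehn filling theorem for relatively hyperbolic groups. Let $M_K = S^3 \setminus \mathrm{int}\,N(K)$, and write $M_\gamma$ for the Dehn filling along a slope $\gamma$, so that $\pi_1(M_\gamma) = G(K)/\langle\langle\gamma\rangle\rangle$. Since $K$ is hyperbolic, $G(K)$ is hyperbolic relative to $P$ by a theorem of Farb, and by Thurston all but finitely many fillings $M_\gamma$ are hyperbolic; both restrictions exclude only finitely many slopes and can be absorbed into the final exceptional set. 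Now fix a non-trivial $g \in G(K)$.

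\emph{If $g$ is conjugate into $P$,} conjugate to arrange $g \in P$. For hyperbolic $M_\gamma$, the group $\pi_1(M_\gamma)$ is torsion-free, and the core curve of the filling solid torus generates an infinite cyclic subgroup of $\pi_1(M_\gamma)$; hence the kernel of $P \to \pi_1(M_\gamma)$ is exactly $\langle\gamma\rangle$. Thus $g$ dies in $\pi_1(M_\gamma)$ only when the primitive slope $\gamma$ divides $g$ in $\mathbb{Z}^2$, and there is at most one such primitive slope, so the set of bad slopes in this case is finite.

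\emph{If $g$ is not conjugate into $P$,} I would apply the Osin / Groves--Manning Dehn filling theorem for relatively hyperbolic groups: for all but finitely many slopes $\gamma$, every non-identity element of the kernel of $G(K) \to G(K)/\langle\langle\gamma\rangle\rangle$ is conjugate into $P$. Since $g$ is not, $g$ must survive, completing this case.

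\emph{The main obstacle} sits entirely in the non-peripheral case: the Osin / Groves--Manning filling theorem is the substantive input, whose proof requires either small-cancellation theory over relatively hyperbolic groups (Osin) or the construction of a cusped hyperbolic-like complex together with a combinatorial Gauss--Bonnet / linear isoperimetric argument (Groves--Manning). The alternative geometric route taken by IchiharaMT represents $g$ by a closed geodesic $c_g \subset M_K$ of positive length and, assuming $g$ vanished in infinitely many $M_{\gamma_n}$, uses Thurston's geometric convergence $M_{\gamma_n} \to M_K$ to extract a null-homotopy of $c_g$ in the limit $M_K$, contradicting non-triviality of $g$; the hard step there is a uniform Margulis/thick-thin argument that prevents the hypothetical null-homotopies from being absorbed into the ever-shrinking filling tori as $\gamma_n \to \infty$. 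Either way, the essence of the theorem is this one uniformity statement, and the two elementary reductions above are what connect it to the knot-theoretic formulation.
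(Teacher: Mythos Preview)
Your two-case split and the handling of the peripheral case are correct, and the overall strategy is precisely that of the cited references. One caveat in the non-peripheral case: the assertion that for all but finitely many $\gamma$ \emph{every} non-identity element of $\ker\bigl(G(K)\to G(K)/\langle\!\langle\gamma\rangle\!\rangle\bigr)$ is conjugate into $P$ is not what Osin and Groves--Manning prove, and is in fact false --- a product such as $a\gamma a^{-1}\cdot b\gamma b^{-1}$ lies in the kernel but is generically loxodromic under the holonomy representation, hence not conjugate into the parabolic subgroup $P$. The correct statement, which is what you actually need, is the \emph{element-specific} one: given any finite $F\subset G(K)\setminus\{1\}$, all but finitely many fillings are injective on $F$. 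With that correction your argument is complete.

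The paper cites Theorem~\ref{GMO} rather than proving it, but it does supply an independent argument for the hyperbolic case in Proposition~\ref{filling_X_hyperbolic} and Remark~\ref{K_hyperbolic}, and that argument takes a different route from yours. Instead of invoking relatively hyperbolic Dehn filling or geometric convergence, the paper works directly with Thurston's continuous family of holonomy representations $\rho_{x,y}\colon\pi_1(X)\to\mathrm{PSL}(2,\mathbb{C})$: a non-peripheral $g$ has $\mathrm{Tr}\,\rho(g)\neq\pm 2$, and since the trace varies continuously in $(x,y)$ while the filling parameters $(p,q)$ accumulate only at $\infty$, $\rho_{p,q}(g)$ remains loxodromic --- in particular non-trivial --- for all but finitely many slopes. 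The peripheral case is handled via \cite[Proposition~3.1]{IMT_realization}, which pins the exceptional slopes down to finite-surgery slopes together with the slope of $g$. This trace-continuity argument is lighter than the full Groves--Manning/Osin machinery and avoids any uniformity or thick--thin analysis, though it is tied to an honest hyperbolic structure; the relatively hyperbolic filling theorem you invoke has the compensating advantage of extending verbatim to prime non-cabled satellite knots, as the paper notes after Theorem~\ref{prime_non-cable}.
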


On the other hand, 
the knot group of a cabled knot, which may be a torus knot (by allowing the trivial companion), or a non-prime knot turns out to have a non-trivial element which becomes trivial for infinitely many Dehn fillings. 
The next theorem shows that such a phenomenon can only occur for these knots. 

\begin{theorem}
\label{prime_non-cable}
Let $K$ be a non-trivial knot. 
Then the following two conditions are equivalent.
\begin{enumerate}
\item
Every non-trivial element of $G(K)$ remains non-trivial for all but finitely many Dehn fillings.
\item
$K$ is a prime, non-cabled knot. 
\end{enumerate}
\end{theorem}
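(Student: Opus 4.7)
The plan is to prove each implication separately, using quite different techniques.

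For $(2)\Rightarrow(1)$ I would reduce to Theorem~\ref{GMO} via the JSJ decomposition. If $K$ is hyperbolic, Theorem~\ref{GMO} gives the conclusion directly. Otherwise $E(K)$ has a non-trivial JSJ decomposition, and I would argue that the piece $M_0$ containing $\partial E(K)$ must be hyperbolic: among the Seifert-fibered pieces that can appear in a knot exterior, only composing spaces, cable spaces, and the whole exterior of a torus knot can occupy the outermost position, and each is excluded by the prime and non-cabled hypotheses. Writing $G(K)$ as the fundamental group of its JSJ graph of groups, Dehn filling on $\partial E(K)$ modifies only $\pi_1(M_0)$. A non-trivial $g\in G(K)$ in Bass--Serre normal form remains non-trivial in $G(K_r)$ provided the filling map $\pi_1(M_0)\to\pi_1(M_0(r))$ is injective on the finite set of vertex entries appearing in the normal form of $g$; Theorem~\ref{GMO} applied to $M_0$ with this finite set yields the conclusion for all but finitely many $r$.

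For $(1)\Rightarrow(2)$ I argue the contrapositive and exhibit explicit non-trivial elements killed by infinitely many fillings. If $K=K_1\# K_2$, write $G(K)=G(K_1)*_{\langle\mu\rangle}G(K_2)$ with $\lambda_K=\lambda_1\lambda_2$ and take $g=[\lambda_1,\lambda_2]$. Since each $\lambda_i\notin\langle\mu\rangle$, the amalgamated-product normal form gives $g\neq 1$ in $G(K)$; yet the integer Dehn filling relation $\mu^n\lambda_1\lambda_2=1$ yields $\lambda_2=\lambda_1^{-1}\mu^{-n}$, so using $[\mu,\lambda_1]=1$ one computes $[\lambda_1,\lambda_2]=[\mu^{-n},\lambda_1^{-1}]=1$ in $G(K_n)$ for every $n\in\mathbb{Z}$.

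If $K=C_{p,q}(J)$ with $|q|\ge 2$, I would use the Seifert structure of the cable space $CS\subset E(K)$, whose central regular fiber $h$ is realized as $\mu_K^{pq}\lambda_K$ on $\partial E(K)$ and as $\mu_J^{p}\lambda_J^{q}$ on the companion torus. For the slopes $r_k=(pqk+1)/k$ with $k\ge 1$, the filling relation $\mu_K^{pqk+1}\lambda_K^{k}=1$ rearranges, using the peripheral commutativity of $\mu_K$ and $\lambda_K$, to $h^k=\mu_K^{-1}$, so $\mu_K$ becomes central in the filled Seifert piece. Take $g=[\mu_K,\lambda_J]$ when $J$ is non-trivial, and $g=[\mu_K,a]$ when $K=T(p,q)=\langle a,b\mid a^p=b^q\rangle$ is a torus knot. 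In each case the auxiliary element ($\lambda_J$ or $a$) commutes with $h$ in the relevant abelian or central subgroup, so $g=1$ in $G(K_{r_k})$ for every $k\ge 1$. Non-triviality of $g$ in $G(K)$ is seen by projecting modulo $\langle h\rangle$: the quotient is $\mathbb{Z}/|q|*\mathbb{Z}$ for non-trivial companions and $\mathbb{Z}/|p|*\mathbb{Z}/|q|$ for torus knots, and in both cases $g$ projects to a reduced non-trivial word.

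The main obstacle I anticipate is the cabled case. The composite case is a direct calculation in an amalgamated product with infinite cyclic edge group. For cables one must identify the correct family of non-integer slopes $r_k$ via the centrality of the regular fiber, and then verify non-triviality of the commutator through the explicit projection to a free product of two groups, with slightly different bookkeeping needed for torus knots versus cables of non-trivial companions.
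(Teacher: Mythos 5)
Your $(1)\Rightarrow(2)$ direction is sound and close in spirit to the paper's: the paper kills all of $[\pi_1(X),\pi_1(X)]$ at once by observing that the relevant filled Seifert piece ($X(m)=S^1\times S^1\times[0,1]$ for composing spaces, a solid torus for cable spaces, a cyclic quotient for torus knots) has abelian fundamental group, whereas you exhibit one explicit commutator per case and verify its death and its survival by hand; both work, and your amalgamated-product computation in the composite case is correct.

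The gap is in $(2)\Rightarrow(1)$, specifically in the sentence asserting that $g$ remains non-trivial in $G(K_r)$ ``provided the filling map $\pi_1(M_0)\to\pi_1(M_0(r))$ is injective on the finite set of vertex entries.'' Injectivity on the vertex entries is not a sufficient condition for the image of the Bass--Serre normal form to remain reduced. Two things can still go wrong. First, if $M_0(r)$ is boundary-reducible, the edge groups $\pi_1(T_j)$ no longer inject into $\pi_1(M_0(r))$ and the graph-of-groups description of $\pi_1(K(r))$ collapses; these slopes must be excluded (finitely many, by Culler--Gordon--Luecke--Shalen, as in Proposition~\ref{filling_X_hyperbolic}(1)). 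Second, and more seriously, a vertex entry $g_i\in\pi_1(M_0)\setminus\bigcup_j\pi_1(T_j)$ can have a non-trivial image that nevertheless lands \emph{inside} some edge group $\pi_1(T_j)\subset\pi_1(M_0(r))$; the word then acquires an incomplete backtrack, adjacent syllables amalgamate into a single vertex group, and the product can become trivial even though every individual entry survived. Ruling this out is precisely the content of the paper's Condition $(g|X)$(ii) and Proposition~\ref{filling_X_hyperbolic}(3), and it is where the real work lies (malnormality of the cusp subgroups and tracking parabolic versus loxodromic behaviour under deformation of the holonomy representation). Relatedly, Theorem~\ref{GMO} as stated applies to a one-cusped knot exterior, not to the multi-cusped piece $M_0$; what you need is the full relative Dehn filling theorem of Groves--Manning/Osin (or the paper's Proposition~\ref{filling_X_hyperbolic}), which controls not only injectivity on a finite set but also non-peripherality of the images.
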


Furthermore, even when $K$ is a non-prime knot or a cabled knot (possibly a torus knot), we have the following result. 

\begin{theorem}
\label{1/n}
Let $K$ be a non-trivial knot. 
Given any finite subset $F \subset G(K) - \{ 1 \}$, 
there exists a constant $N_F$ such that 
$g$ remains non-trivial after $1/n$--Dehn filling for all $g \in F$ provided $n \ge N_F$. 
\end{theorem}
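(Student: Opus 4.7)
The plan is to use the JSJ decomposition of $E(K)$ to reduce the statement to an analysis of the outermost piece, since a $1/n$--Dehn filling only alters that piece of the graph-of-groups structure of $G(K)$.

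First I would fix a JSJ decomposition $E(K) = M_0 \cup_{T_1} M_1 \cup \cdots \cup M_k$ with $M_0$ the piece containing $\partial E(K)$, which gives $G(K)$ the structure of a graph of groups with edge groups $\pi_1(T_j) \cong \mathbb{Z}^2$. The $1/n$--filling adjoins the single new relator $\mu\lambda^n$ to the vertex group $\pi_1(M_0)$ and leaves every other vertex and edge group untouched. Writing each $g \in F$ in reduced Bass--Serre normal form, the non-triviality of $g$ in the filled group reduces, by the normal form/Britton-type lemma for graphs of groups, to (a) non-triviality of each of the finitely many normal-form letters inside the filled $\pi_1(M_0)/\langle\langle \mu\lambda^n \rangle\rangle$, and (b) persistence of the edge-group inclusions $\pi_1(T_j) \hookrightarrow \pi_1(M_0)/\langle\langle \mu\lambda^n \rangle\rangle$. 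Both conditions concern only a finite subset $F' \subset \pi_1(M_0)$ determined by $F$.

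Next I would case-split on the geometry of $M_0$. If $M_0$ is hyperbolic, the relative form of Theorem~\ref{GMO}, applied to $M_0$ regarded as a cusped hyperbolic manifold with the remaining boundary tori fixed, gives $N_F$ so that $F'$ embeds for every $1/n$ slope with $n \ge N_F$ and the peripheral $\mathbb{Z}^2$ stays injective. If $M_0$ is Seifert fibered, then $K$ is either a torus knot ($k=0$) or a cable knot; in this case $\pi_1(M_0)$ is a central extension whose center is generated by the regular fiber, which agrees up to sign with $\lambda$ on the cusp. The relation $\mu\lambda^n = 1$ then pushes $\mu$ into the center, and an explicit identification of the quotient as the fundamental group of a small Seifert fibered manifold, combined with a length-versus-$n$ small cancellation estimate in the spirit of the keyword theme, establishes both (a) and (b) for $n$ large.

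The main obstacle will be the Seifert cable-knot subcase: one must simultaneously verify non-triviality of the bounded-length letters and injectivity of the edge-group inclusion along the companion torus $T_1$, using only the explicit Seifert structure of the filled cable space. A secondary issue is composite knots, where the swallow-follow tori must be adjoined to the JSJ structure and the argument iterated across prime summands; here the $1/n$ specialization is crucial, because generic Dehn fillings on composite knots kill non-trivial elements, so any proof has to exploit the specific form of the relator $\mu\lambda^n$ rather than a generic Dehn filling hypothesis.
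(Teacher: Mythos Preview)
Your overall architecture matches the paper's: reduce via the graph-of-groups structure to the outermost piece $X$ and then case-split on its geometry. But two concrete gaps would block the proof as written.

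First, your conditions (a) and (b) are not sufficient for the Bass--Serre normal form to persist. For a reduced word to remain reduced after filling you need more than non-triviality of the $\pi_1(M_0)$-letters and injectivity of the edge groups: whenever a $\pi_1(M_0)$-letter $x_i$ sits at a backtrack $e_j\, x_i\, \overline{e_j}$, you must know that $x_i$ stays \emph{outside} $\pi_1(T_j)$ after the filling. A non-trivial letter can fall into an edge group and collapse the form. The paper isolates exactly this requirement as Condition~$(g|X)$ and, in the hyperbolic case, proves it (Proposition~\ref{filling_X_hyperbolic}) by tracking loxodromic/parabolic types under deformation of the holonomy and invoking malnormality of peripheral subgroups---not merely by quoting a relative version of Theorem~\ref{GMO}, which gives non-triviality but not the ``stays non-peripheral'' conclusion.

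Second, your Seifert analysis rests on a false identification: the regular fiber of the outermost Seifert piece is \emph{not} $\lambda$. For a $(p,q)$-torus knot or cable it represents $\mu^{pq}\lambda$, and for the composing space it is the meridian $\mu$. So the relator $\mu\lambda^n$ does not ``push $\mu$ into the center'' in the way you describe, and the filled quotient is not what you expect. The paper's Propositions~\ref{filling_X_composing}, \ref{filling_X_cable}, \ref{torus _knot_fully} do the honest computation: quotient by the (correct) fiber class, land in a free product of the form $\mathbb{Z}_n\ast\mathbb{Z}$ or $\mathbb{Z}_q\ast\mathbb{Z}_{|pqn-1|}$ or $\mathbb{Z}_p\ast\mathbb{Z}_q/\langle\!\langle (c_1c_2)^{pqn-1}\rangle\!\rangle$, and then verify both non-triviality and the ``stays outside $\pi_1(T_j)$'' condition once $n$ exceeds a bound determined by the exponents occurring in $g$ (small cancellation is used only in the torus-knot endgame). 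Your instinct that the $1/n$ specialization is essential in the composite/cable cases is correct, but the mechanism is this explicit free-product normal-form bookkeeping, not the centrality argument you sketch.
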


In particular, as we expected, the following holds. 

\begin{corollary}
\label{vanish_for _all}
Let $K$ be a non-trivial knot and $g$ an element of $G(K)$. 
Then $g$ vanishes for all the non-trivial Dehn fillings if and only if $g$ is the identity element.  
\end{corollary}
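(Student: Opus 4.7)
The corollary is essentially a restatement of Theorem~\ref{1/n} applied to a singleton set. The plan is therefore very short.

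The ``if'' direction is trivial: if $g = 1$, then any group homomorphism, in particular the map to $\pi_1$ of any Dehn filling, sends $g$ to the identity. So the content is entirely in the ``only if'' direction, which I prove by contrapositive.

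Assume $g \in G(K)$ is non-trivial. Apply Theorem~\ref{1/n} to the finite subset $F = \{g\} \subset G(K) - \{1\}$. This yields a constant $N_{F} = N_g$ such that for every integer $n \ge N_g$, the image of $g$ under the $1/n$--Dehn filling is non-trivial. Since $1/n$ is a non-trivial (in fact non-meridional) slope for every $n \ge 1$, each such filling is a non-trivial Dehn filling, and we have exhibited infinitely many non-trivial Dehn fillings under which $g$ does not vanish. In particular, it is not the case that $g$ vanishes under all non-trivial Dehn fillings, completing the contrapositive.

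There is no real obstacle here; the only thing to be careful about is the logical form of the statement (``vanishes for all the non-trivial Dehn fillings'' is negated by ``there exists at least one non-trivial Dehn filling under which $g$ survives''), and to note that the slopes $1/n$ with $n$ sufficiently large are indeed non-trivial slopes on the boundary torus of the knot exterior, so that the fillings produced by Theorem~\ref{1/n} are legitimate witnesses.
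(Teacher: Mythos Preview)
Your proof is correct and matches the paper's approach: the paper presents this corollary with the phrase ``In particular, as we expected, the following holds'' immediately after Theorem~\ref{1/n}, treating it as a direct consequence without writing out a separate proof. Your argument---apply Theorem~\ref{1/n} to the singleton $F=\{g\}$ to produce infinitely many non-trivial $1/n$--fillings under which $g$ survives---is exactly the intended one.
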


The next result is a straightforward consequence of Corollary~\ref{vanish_for _all}. 

\begin{corollary}
\label{Dehn_filling_homo}
The ``universal" Dehn filling homomorphism 
\[
\Phi \colon G(K) \to \prod_{r \in \mathbb{Q}} \pi_1(K(r))
\]
is injective. 
\end{corollary}

\begin{remark}
More precisely, 
following Theorem~\ref{1/n}
we may embed $G(K)$ in $\displaystyle\prod_{n\in \mathbb{Z}} \pi_1(K(1/n))$. 
\end{remark}

Let $K$ be a non-trivial knot with the exterior $E(K)$. 
Every essential simple closed curve on $\partial E(K)$ represents a {\em slope element} $\gamma$. 
Denote by $\langle\!\langle \gamma \rangle\!\rangle$ the normal closure of $\gamma$. 
Note that both $\gamma$ and its inverse $\gamma^{-1}$ define the same normal closure 
$\langle\!\langle \gamma \rangle\!\rangle = \langle\!\langle \gamma^{-1} \rangle\!\rangle$, and correspond to a slope $r \in \mathbb{Q}$, 
so it is reasonable to write $\langle\!\langle r \rangle\!\rangle = \langle\!\langle \gamma \rangle\!\rangle$. 
It is easy to see that  $\langle\!\langle r \rangle\!\rangle$ consists of elements of $G(K)$ which becomes trivial after $r$--Dehn fillings. 

In the language of knot group (without using Dehn filling), 
Theorem~\ref{prime_non-cable} and Corollary~\ref{vanish_for _all} have the following reformulation. 

\begin{corollary}\
\label{normal_closure}
For any non-trivial knot, we have
\[
\bigcap_{r \in \mathbb{Q}} \langle\!\langle r \rangle\!\rangle = \{ 1 \}.
\]
Furthermore, 
\[
\bigcap_{r \in S} \langle\!\langle r \rangle\!\rangle = \{ 1 \}\quad \textrm{for any infinite subset $S \subset \mathbb{Q}$}
\]
 if and only if $K$ is a prime, non-cabled knot. 
\end{corollary}

This result should be compared with the following remark.  

\begin{remark}
\label{large_intersection}
For arbitrary finite subset $S \subset \mathbb{Q}$, 
\[
\bigcap_{r \in S} \langle\!\langle r \rangle\!\rangle \ne \{ 1 \}
\]
except when $K$ is a torus knot $T_{p, q}$ and $pq \in S$; see \cite[Theorem~1.4]{IMT_Magnus}. 
\end{remark}

As we will see in Subsection~\ref{hyperbolicity}, not all knot groups are residually closed hyperbolic 3-manifold groups. 
However, applying Theorem~\ref{1/n}, we can extend Theorem~\ref{hyp_fully} to all non-trivial knot groups, after relaxing the condition for the target $3$--manifold groups.

\begin{corollary}
\label{closed_3_manifold_group}
Every non-trivial knot group is fully residually closed $3$--manifold group.  
\end{corollary}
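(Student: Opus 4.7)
The plan is to deduce this corollary directly from Theorem~\ref{1/n} via the standard reduction of simultaneous injectivity to individual non-triviality. Fix a finite subset $F \subset G(K)$. An epimorphism $\varphi \colon G(K) \twoheadrightarrow Q$ is injective on $F$ if and only if $\varphi(g) \neq 1$ for every $g$ in the finite set
\[
F' \;=\; \{\, xy^{-1} \mid x, y \in F,\ x \neq y \,\} \;\subset\; G(K) \setminus \{1\}.
\]
(Note $1 \notin F'$ since $xy^{-1} = 1$ forces $x = y$.) So it suffices to produce a single epimorphism onto a closed $3$--manifold group which kills no element of $F'$.

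Next, I would apply Theorem~\ref{1/n} to $F'$: this yields a threshold $N_{F'}$ such that for every integer $n \geq N_{F'}$, each element of $F'$ remains non-trivial after $1/n$--Dehn filling. Pick any such $n$, and let $K(1/n)$ denote the closed $3$--manifold obtained from $S^3$ by $1/n$--surgery along $K$. The inclusion $E(K) \hookrightarrow K(1/n)$ together with Seifert--van Kampen produces a surjection
\[
\varphi_n \colon G(K) \;=\; \pi_1(E(K)) \;\twoheadrightarrow\; \pi_1(K(1/n)),
\]
obtained by killing the filling slope. By our choice of $n$, the map $\varphi_n$ is injective on $F$, and $\pi_1(K(1/n))$ is by construction a closed $3$--manifold group, which proves the corollary.

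Since all of the analytic and topological content has been absorbed into Theorem~\ref{1/n}, there is essentially no obstacle left here: the argument is a purely formal diagonalization over the finitely many differences $xy^{-1}$. In particular, we do not need $K(1/n)$ to be irreducible, prime, or hyperbolic; we only need it to be \emph{some} closed $3$--manifold, which is automatic for Dehn filling along a knot in $S^3$. This is precisely the reason that relaxing the target class from "closed hyperbolic $3$--manifold groups" (as in Theorem~\ref{hyp_fully}) to "closed $3$--manifold groups" allows the conclusion to extend from hyperbolic knots to arbitrary non-trivial knots.
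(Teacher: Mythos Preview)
Your proof is correct and is essentially the argument the paper intends: the corollary is stated immediately after Theorem~\ref{1/n} with no separate proof, and the reduction you use (injectivity on $F$ is equivalent to non-vanishing on the finite set of pairwise quotients $xy^{-1}$) is exactly the content of Proposition~\ref{fully_residual_P}. Applying Theorem~\ref{1/n} to $F'$ and taking any $n \ge N_{F'}$ gives the required epimorphism onto the closed $3$--manifold group $\pi_1(K(1/n))$.
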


Long-Reid proved that hyperbolic knot groups (more generally, hyperbolic 3-manifold groups) are fully residually simple (\cite{LR}, see also \cite[(C8)]{AFW}).
On the other hand, torus knot groups cannot be residually simple since they have non-trivial centers,  
and \cite[Question 7.3.4]{AFW} (for knot group case) asks the residually simpleness for satellite knot groups. 
Note that a non-abelian simple group is perfect. 
In this prospect, it is natural to ask whether all non-trivial knot groups are (fully) residually perfect. 
Since $1/n$--surgery yields an integral homology $3$--sphere, which has perfect fundamental group, 
Theorem~\ref{1/n} gives an affirmative answer. 

\begin{corollary}
\label{residually_perfect}
Every non-trivial knot group is fully residually perfect, closed $3$--manifold group. 
\end{corollary}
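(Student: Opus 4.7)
The plan is to deduce Corollary~\ref{residually_perfect} as a short packaging of Theorem~\ref{1/n} together with a classical homological fact. Two observations drive the argument: injectivity of a homomorphism on a finite subset is equivalent to the non-vanishing of a finite set of non-trivial differences, and $1/n$-Dehn filling of a knot exterior in $S^3$ produces an integral homology $3$-sphere, whose fundamental group is automatically perfect.

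First, I would fix an arbitrary finite subset $F' \subset G(K)$ and form the associated finite set of non-trivial differences
\[
F := \{ f g^{-1} : f, g \in F',\ f \neq g \} \setminus \{1\} \ \subset \ G(K) - \{1\}.
\]
For any homomorphism $\varphi$ on $G(K)$, injectivity on $F'$ is equivalent to $\varphi$ not killing any element of $F$. Applying Theorem~\ref{1/n} to this $F$ then yields a constant $N_F$ such that for every integer $n \geq N_F$, every element of $F$ survives under the natural quotient
\[
\varphi_n \colon G(K) \twoheadrightarrow \pi_1(K(1/n)),
\]
where $K(1/n)$ denotes the closed orientable $3$-manifold obtained by $1/n$-Dehn filling of the exterior of $K$. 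This map is surjective by Van Kampen, its target is manifestly a closed $3$-manifold group, and by construction $\varphi_n$ is injective on $F'$.

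It remains only to verify perfection of $\pi_1(K(1/n))$, which follows from the classical fact that $1/n$-surgery on any knot in $S^3$ yields an integral homology $3$-sphere: consequently $H_1(K(1/n);\mathbb{Z})=0$, so the abelianisation of $\pi_1(K(1/n))$ is trivial. Since the substantive content has been shifted into Theorem~\ref{1/n}, this deduction itself presents no real obstacle; the genuine difficulty of the result lies upstream in establishing Theorem~\ref{1/n}, especially for non-hyperbolic knots, where one must control survival of group elements under Dehn filling via the JSJ decomposition.
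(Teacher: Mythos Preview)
Your proposal is correct and matches the paper's own reasoning: the corollary is stated there as an immediate consequence of Theorem~\ref{1/n} together with the fact that $1/n$--surgery on a knot in $S^3$ yields an integral homology sphere (hence perfect fundamental group). Your explicit reduction of injectivity on $F'$ to non-vanishing on the difference set $F$ is exactly the content of the paper's Proposition~\ref{fully_residual_P}, so the two arguments coincide.
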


\section{Fully residual property}
\label{fully residual property}

In this section, 
we present some general results on fully residual property. 
The first one may be well known, but we provide its proof for completeness. 
In the following, let $\mathcal{P}$ be any class of groups. 

\begin{proposition}
\label{fully_residual_P}
A group $G$ is fully residually $\mathcal{P}$ if and only if 
for given finitely many non-trivial elements $g_1, \dots, g_m$ of $G$, 
there exists an epimorphism $\psi$ from $G$ to some group in $\mathcal{P}$ such that 
$\psi(g_i) \ne 1$ for $1 \le i \le n$. 
\end{proposition}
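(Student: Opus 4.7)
The plan is to prove the two directions separately, both by reducing one finite-set condition to the other via a simple combinatorial trick.

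For the forward implication, I would start with a finite list of non-trivial elements $g_1,\dots,g_m$ and simply apply the fully residual property to the finite subset $F=\{1,g_1,\dots,g_m\}\subset G$. The resulting epimorphism $\varphi\colon G\to P\in\mathcal{P}$ is injective on $F$, so $\varphi(g_i)\neq \varphi(1)=1$ for every $i$, which is exactly what is required. This direction should be immediate and take only a sentence.

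For the reverse implication, the key observation is that injectivity of a homomorphism on a finite set can be detected by non-vanishing of finitely many elements: a homomorphism $\psi$ is injective on a finite subset $F=\{h_1,\dots,h_k\}$ if and only if $\psi(h_ih_j^{-1})\neq 1$ for every pair $i\neq j$ with $h_i\neq h_j$. So given $F$, I would form the finite list of non-trivial elements $\{h_ih_j^{-1}:i\neq j,\ h_i\neq h_j\}$ and apply the hypothesis to obtain an epimorphism $\psi\colon G\to P\in\mathcal{P}$ sending each of these to a non-trivial element. Then $\psi(h_i)\neq \psi(h_j)$ whenever $h_i\neq h_j$, i.e.\ $\psi$ is injective on $F$.

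There is no real obstacle here; the statement is essentially a routine reformulation. The only minor care needed is to handle the degenerate cases where $F$ contains the identity or repeated elements, which is why I explicitly include $1$ in $F$ for the forward direction and restrict to pairs with $h_i\neq h_j$ for the reverse direction.
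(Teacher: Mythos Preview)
Your proposal is correct and follows essentially the same approach as the paper: adjoin $1$ to the given finite set for the forward direction, and use the differences $h_ih_j^{-1}$ for the reverse direction. The only difference is cosmetic---you are slightly more explicit about excluding pairs with $h_i=h_j$, whereas the paper tacitly lists the $h_i$ without repetition.
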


\begin{proof}
Assume that $G$ is fully residually $\mathcal{P}$. 
Then for a given finite set $F = \{ g_1, \dots, g_m \}$, 
let us put $\widehat{F} = \{1, g_1, \dots, g_m \}$. 
Since $G$ is fully residually $\mathcal{P}$, 
for the finite subset $\widehat{F}$, 
we have an epimorphism $\varphi$ from $G$ to some group in $\mathcal{P}$ 
which is injective on $\widehat{F}$, in particular, $\varphi(g_i) \ne 1$ for every element of $F$. 
Conversely, 
let us take a finite subset $\{ h_1, \dots, h_n \}$. 
For a finite subset $\{ h_ih_j^{-1}\}_{1 \le i < j \le n}$, 
by the assumption we have an epimorphism $\psi$ from $G$ to some group in $\mathcal{P}$ such that 
$\psi(h_ih_j^{-1}) \ne 1$, i.e. $\psi(h_i) \ne \psi(h_j)$ for all $1 \le i < j \le n$. 
This means that $\psi$ is injective on $\{ h_1, \dots, h_n \}$.  
\end{proof}

\medskip

Following this we will mainly use the latter equivalent form as the definition of $G$ being fully residually $\mathcal{P}$. 

\medskip

In general, a residually $\mathcal{P}$ group may not be fully residually $\mathcal{P}$. 
However, for knot groups $G(K)$, 
if $K$ is not a torus knot, 
these two properties are equivalent. 
To prove this , we begin by recalling \cite[Lemma~5.1]{IMT_Magnus}, in which $y$ is assumed to be a slope element, but the proof works for a non-central element. 

\begin{lemma}[\cite{IMT_Magnus}]
\label{Magnus_lemma}
Let $x, y$ be elements in $G(K)$.
If $x, y \not\in Z(G(K))$, then we can take an element $h \in G(K)$ so that $[h x h^{-1}, y] \ne 1$.  
\end{lemma}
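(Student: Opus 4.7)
The plan is to argue by contrapositive. Suppose that $[hxh^{-1},y]=1$ for every $h\in G(K)$; I aim to deduce that $y$ (and hence, by symmetry, $x$) must lie in $Z(G(K))$. The hypothesis is equivalent to saying that the conjugacy class of $x$ in $G(K)$ is entirely contained in the centralizer $C_{G(K)}(y)$, and hence that the normal closure $\langle\!\langle x\rangle\!\rangle$ of $x$ in $G(K)$ sits inside $C_{G(K)}(y)$. The task thus reduces to ruling out the existence of a non-trivial normal subgroup of $G(K)$ that is entirely centralized by a non-central element.

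My approach is to split into the three cases given by the JSJ type of the knot exterior $E(K)$. If $K$ is hyperbolic, then $Z(G(K))=1$ and the centralizer of any non-trivial element is abelian (either the peripheral $\mathbb{Z}^{2}$ or an infinite cyclic subgroup); since the fundamental group of a cusped hyperbolic $3$-manifold admits no non-trivial abelian normal subgroup, $\langle\!\langle x\rangle\!\rangle$ must be trivial, so $x=1$, a contradiction. If $K$ is a torus knot $T(p,q)$, then $G(K)=\langle a,b\mid a^{p}=b^{q}\rangle$ is the amalgam $\mathbb{Z}\ast_{\mathbb{Z}}\mathbb{Z}$ with $Z(G(K))=\langle a^{p}\rangle$, and a direct Bass-Serre analysis of centralizers shows that for $y\notin Z(G(K))$, $C_{G(K)}(y)$ is abelian and its only normal subgroup of $G(K)$ is $Z(G(K))$ itself, again forcing $x\in Z(G(K))$. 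If $K$ is a satellite knot, I would work on the Bass-Serre tree $T$ of the non-trivial JSJ splitting of $G(K)$: if $x$ acts loxodromically, distinct conjugates of $x$ have distinct axes and centralizing them all forces $y$ to fix every such axis, hence to act elliptically at every vertex, which pins $y$ into the intersection of all vertex stabilizers; if $x$ acts elliptically, then $x$ lies in a vertex group and one can restrict the argument to that vertex stabilizer, which is the fundamental group of a hyperbolic or Seifert fibered piece, reducing to one of the previous two cases.

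The main obstacle will be the satellite case, where one must verify carefully that an element centralizing every translate of a loxodromic element on the Bass-Serre tree can only be central, and this in turn requires controlling the intersection of vertex stabilizers along a sufficiently rich family of edges in the JSJ graph. A cleaner way to bypass this case analysis altogether is to invoke the relative hyperbolicity of $G(K)$ with respect to the subgroups carrying its non-hyperbolic JSJ pieces, as is already used in the proof of Theorem~\ref{prime_non-cable} via \cite[Theorem~7.2.2]{AFW} and \cite{Dahmani}: in a non-elementary relatively hyperbolic torsion-free group, the centralizer of any non-central element is elementary, and hence cannot contain the non-trivial normal subgroup $\langle\!\langle x\rangle\!\rangle$, which immediately closes the proof.
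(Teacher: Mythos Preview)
The paper does not supply its own proof of this lemma: it is quoted directly from \cite[Lemma~5.1]{IMT_Magnus}, with the remark that although that reference states it for $y$ a slope element, the same argument works for any non-central $y$. So there is nothing in the paper to compare against, and your sketch has to be judged on its own merits.

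Your contrapositive reformulation $\langle\!\langle x\rangle\!\rangle\subset C_{G(K)}(y)$ is correct, and the hyperbolic and torus-knot cases are fine. There are, however, two genuine gaps in the satellite case. First, your elliptic sub-case does not reduce to the earlier ones. Even when $x$ lies in a vertex group $G_v$, there is no reason $y\in G_v$; and in any event $G_v$ is the fundamental group of a JSJ piece (a cable space, a composing space, or a multi-cusped hyperbolic manifold), not a knot group, so neither your hyperbolic-knot nor your torus-knot argument applies to it. The honest fix is to show that the normal subgroup $\langle\!\langle x\rangle\!\rangle$ always contains a loxodromic element of the JSJ tree: if $x$ is elliptic, pick $h$ with $\mathrm{Fix}(x)\cap h\,\mathrm{Fix}(x)=\emptyset$ (this is where one uses that the kernel of the action is trivial, which in turn relies on adjacent Seifert pieces having non-matching fibers) and observe that $x\cdot hxh^{-1}$ is then loxodromic; now run your loxodromic argument. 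Second, your proposed shortcut via relative hyperbolicity does not cover all satellite knots. When every JSJ piece of $E(K)$ is Seifert fibered---for instance when $K$ is an iterated torus knot or a cable of a torus knot---the exterior is a graph manifold and $G(K)$ is not non-elementarily relatively hyperbolic with respect to any proper collection of subgroups, so the appeal to \cite[Theorem~7.2.2]{AFW} and \cite{Dahmani} yields nothing in those cases.
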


\begin{proposition}
\label{fully_residual=residual}
Let $K$ be a knot which is not a torus knot. 
Then $G(K)$ is residually $\mathcal{P}$ if and only if it is fully residually $\mathcal{P}$. 
\end{proposition}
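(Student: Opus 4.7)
The ``only if'' direction is immediate from Proposition~\ref{fully_residual_P} by taking the single-element case. For the converse my plan is to use the reformulation in Proposition~\ref{fully_residual_P} and reduce a finite list of non-trivial elements to a single one: given non-trivial $g_1, \ldots, g_n \in G(K)$, I will construct a single non-trivial element $g \in G(K)$ with the property that any homomorphism $\varphi$ from $G(K)$ satisfying $\varphi(g) \ne 1$ automatically has $\varphi(g_i) \ne 1$ for every $i$. Applying the residual $\mathcal{P}$ hypothesis to such a $g$ then yields the required epimorphism, and Proposition~\ref{fully_residual_P} gives the conclusion.

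The construction is by iterated commutators. The crucial ambient fact is that, because $K$ is not a torus knot, the center $Z(G(K))$ is trivial (a classical consequence of the Seifert/JSJ structure of knot exteriors), so every non-trivial element is automatically non-central. Put $\widetilde{g}_1 := g_1$, and suppose a non-trivial $\widetilde{g}_{k-1}$ has been produced. Then $\widetilde{g}_{k-1}$ and $g_k$ are both non-central, so Lemma~\ref{Magnus_lemma} supplies an element $h_k \in G(K)$ with
\[
\widetilde{g}_k := [h_k \widetilde{g}_{k-1} h_k^{-1},\, g_k] \;\ne\; 1.
\]
Set $g := \widetilde{g}_n$, which is non-trivial by construction.

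To verify the required property of $g$, fix any homomorphism $\varphi$ and suppose $\varphi(g_j) = 1$ for some $j$. An easy induction on $k \ge j$ then shows $\varphi(\widetilde{g}_k) = 1$, since the commutator $[h_k \widetilde{g}_{k-1} h_k^{-1}, g_k]$ maps to $1$ as soon as either $\varphi(\widetilde{g}_{k-1}) = 1$ or $\varphi(g_k) = 1$. Contrapositively, $\varphi(g) \ne 1$ forces $\varphi(g_i) \ne 1$ for every $i$, which is exactly what we need.

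The main obstacle is to ensure that the recursion keeps producing non-trivial elements, i.e.\ that Lemma~\ref{Magnus_lemma} remains applicable at each stage. This is precisely where the non-torus knot hypothesis enters: without triviality of the center one cannot guarantee that the iterated element $\widetilde{g}_{k-1}$ is non-central, and the construction could collapse at some step. Granted this, the rest of the argument is formal manipulation.
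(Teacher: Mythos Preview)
Your argument is correct. Both your proof and the paper's rest on the same two ingredients: the triviality of $Z(G(K))$ for non-torus knots, and Lemma~\ref{Magnus_lemma} to manufacture non-trivial commutators. The packaging differs, however. The paper argues by contradiction via a minimal counterexample: it takes a smallest ``bad'' set $\{g_1,\dots,g_m\}$, forms the $m-1$ \emph{flat} commutators $x_i=[h_ig_ih_i^{-1},g_{i+1}]$, invokes minimality to find an epimorphism not killing any $x_i$, and then derives a contradiction because killing some $g_j$ forces killing the adjacent $x_{j-1}$ (or $x_1$ when $j=1$). You instead give a direct construction, building a single \emph{nested} commutator $\widetilde g_n$ whose survival under any homomorphism forces survival of every $g_i$. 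Your route is slightly more economical—it avoids the minimal-counterexample bookkeeping and produces an explicit witness element—while the paper's version keeps the commutators at depth one. Both are standard ways to exploit the same lemma, and neither offers a real advantage in generality.
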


\begin{proof}
Assume that $G(K)$ is residually $\mathcal{P}$. 
Suppose for a contradiction that it is not fully residually $\mathcal{P}$. 
Then there exists a finite subset $F =  \{ g_1, \dots, g_m \}$ of non-trivial elements of $G$ satisfying: 

\smallskip

$(\ast)$ For any group $H$ in $\mathcal{P}$ and any epimorphism $q \colon G(K) \to H$, 
$q(g_i) = 1$ for some $g_i \in F$. 

\smallskip

Let us pick such a finite subset $F$ so that $|F| = m$ is minimal. 
Since $G(K)$ is assumed to be residually $\mathcal{P}$, $m \ge 2$. 
Then, by definition, we have: 

\smallskip

$(\ast\ast)$ For any $m-1$ non-trivial elements $x_1, \dots, x_{m-1}$ of $G(K)$, 
we have a group $H'$ in $\mathcal{P}$ and an epimorphism $q' \colon G(K) \to H'$ such that 
$q'(x_i) \ne 1$ for $1 \le i \le m-1$. 

\smallskip

Since $K$ is not a torus knot, $Z(G(K)) = \{ 1 \}$. 
Apply this Lemma~\ref{Magnus_lemma} to $x = g_i,\ y=g_{i+1}\ (1 \le i \le m-1)$ to see that 
there exists an element $h_i$ such that 
$[h_i g_i h_i^{-1}, g_{i+1}] \ne 1$ for $i = 1, \dots, m-1$. 
Let us write $x_1 = [h_1 g_1 h_1^{-1}, g_2],\dots, x_{m-1} = [h_{m-1} g_{m-1} h_{m-1}^{-1}, g_m]$. 
Following $(\ast\ast)$ we have a group $H'$ and an epimorphism $q' \colon G(K) \to H'$ such that 
$q'(x_i) \ne 1$ for $i =1, \dots, m-1$. 
On the other hand, 
$(\ast)$ shows that $q'(g_j) = 1$ for some $1 \le j \le m$, 
which implies $q'(x_{j-1}) = 1$; if $j =1 $, then $q'(x_1) = 1$. 
This is a contradiction. 
Hence, $G(K)$ is fully residually $\mathcal{P}$. 
\end{proof}

\medskip 

Our proof of Theorem~\ref{1/n} does not require Proposition~\ref{fully_residual=residual}, 
but this fact has its own interest. 

\section{Decomposing the knot group and its elements}
\label{fundamental_groups}

Let $K$ be a non-trivial knot in the $3$--sphere $S^3$ with exterior $E(K)$. 
Consider the torus decomposition of the exterior $E(K)$, 
and let $X$ be the outermost decomposing piece containing $T = \partial E(K)$. 
Denote $\partial X = T \cup T_1 \cup \cdots \cup T_n$. 
Note that each $T_i$ bounds a non-trivial knot exterior $E_i$ in $E(K)$ for $1 \le i \le n$. 
Then $E(K)$ is expressed as  $E(K) = X \cup E_1 \cup \cdots \cup E_n$. 
If $K$ is a torus knot or a hyperbolic knot, 
then $X = E(K)$ with $\partial X = T$. 
Throughout this section we assume that $E(K)$ admits a non-trivial torus decomposition, 
i.e. $K$ is a satellite knot. 

\subsection{Convention on the fundamental groups}
\label{convention}

Throughout this paper, 
we choose a point $p$ on $T = \partial E(K)$ as the base-point of $\pi_1(E(K))$.  

For each $E_i$ and $T_i$ we take a point $p_i \in T_i \subset E_i$.  
Since each $T_i$ is incompressible in $E(K)$, 
the Loop Theorem says that 
\[
\varphi \colon \pi_1(T, p) \to \pi_1(X, p),\; \varphi_i \colon \pi_1(T_i, p_i) \to \pi_1(X, p_i)\
\textrm{and}\  \psi_i \colon \pi_1(T_i, p_i) \to \pi_1(E_i, p_i)
\]
are injective, 
and thus 
\[
\Phi \colon \pi_1(X, p) \to \pi_1(E(K), p)\quad  \textrm{and}\quad  \Phi _i \colon \pi_1(E_i, p_i) \to \pi_1(E(K), p_i)
\]
are also injective, 
where $\varphi, \varphi_i, \psi_i, \Phi, \Phi_i$ denote monomorphisms induced by the inclusions. 
For notational simplicity, we will denote $\pi_1(E(K), p)$ by $\pi_1(E(K)) = G(K)$.  
We also use $\pi_1(X)$ to denote $\pi_1(X, p)$.  

Let us describe how to regard 
$\pi_1(T_i, p_i)$ as a subgroup of $\pi_1(X)$. 
Choose and fix a path $u_i$ in $X$ from the base point $p$ to $p_i$, which gives an isomorphism 
$u_{i, \ast} \colon \pi_1(X, p_i) \to \pi_1(X, p)$ 
sending $[c_i] \in \pi_1(T_i, p_i)$ to $u_{i, \ast}([c_i]) = [u_i \ast c_i \ast \overline{u_i}] \in \pi_1(T_i, p)$, 
where $\ast$ denotes the concatenation and $\overline{u_i}$ denotes $u_i$ with opposite orientation.
Now we put $\pi_1(T_i) = u_{i, \ast}(\varphi_i(\pi_1(T_i, p_i))) \subset u_{i, \ast}(\pi_1(X, p_i)) = \pi_1(X)$.  
Also we identify $\pi_1(X)$ with the subgroup $\Phi(\pi_1(X)) \subset \pi_1(E(K))$. 
This identification also specifies the subgroup $\pi_1(T_i) = \Phi(u_{i, \ast}(\varphi_i(\pi_1(T_i, p_i))))
\subset \pi_1(E(K))$. 
For $\pi_1(E_i, p_i)$, 
using the pre-specified path $u_i$ in $X$, 
we set $\pi_1(E_i) = u_{i, \ast}(\Phi_i(\pi_1(E_i, p_i))) \subset \pi_1(E(K))$. 
Note that for $T_i \subset E_i$, 
$u_{i, \ast}(\Phi_i(\psi_i(\pi_1(T_i, p_i)))) \subset \pi_1(E(K), p)$ coincides with $\pi_1(T_i) \subset \pi_1(E(K))$. 
The above convention gives precise specification of subgroups 
$\pi_1(X), \pi_1(E_i)$ and $\pi_1(T_i)$ in $\pi_1(E(K)) = G(K)$. 
We call $\pi_1(X)$ and $\pi_1(E_i)$ the factor groups of $G(K)$. 

It should be noted here that for another path $w_i$ in $X$ from $p$ to $p_i$, 
we have 
\[
w_i \ast  \alpha_i \ast \bar{w_i} = w_i \ast \bar{u_i} \ast u_i* \alpha_i \ast \bar{u_i} \ast u_i \ast \bar{w_i}
=  (w_i \ast \bar{u_i} ) \ast ( u_i *\alpha_i \ast \bar{u_i}) \ast \overline{(w_i \ast \bar{u_i})},
\]
where $w_i \ast \bar{u_i} \in \pi_1(X) \subset G(K)$ and $u_i \ast \alpha_i \ast \bar{u_i} \in \pi_1(E_i) \subset G(K)$. 
See Figure~\ref{splitting_graph_groups}. 

\begin{figure}[htb]
\centering
\includegraphics[width=0.65\textwidth]{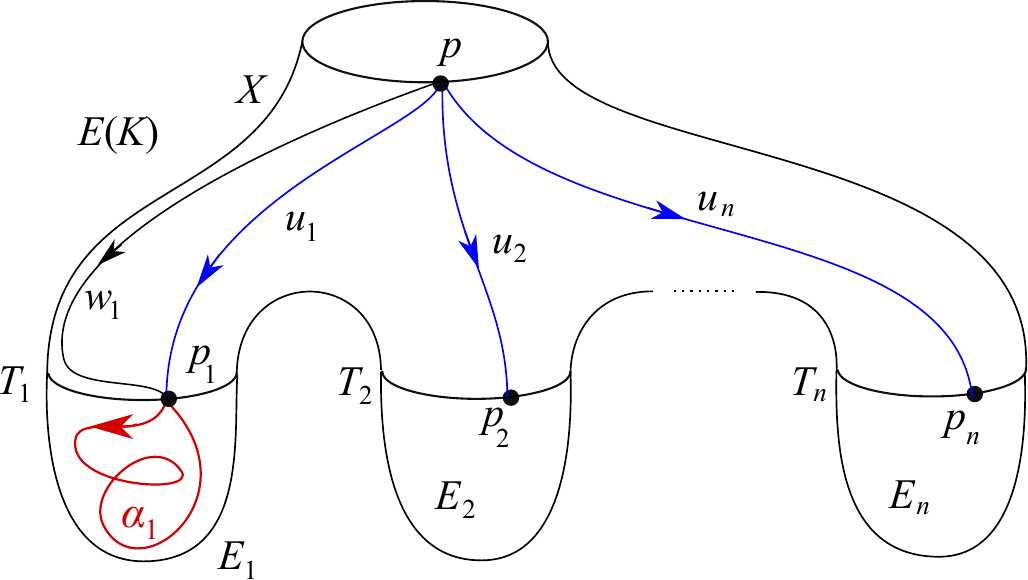}
\caption{The subgroups $\pi_1(E_i)$.} 
\label{splitting_graph_groups}
\end{figure}

\noindent
\subsection{Decomposition of elements of $G(K)$}
\label{decomposition_elements}

In this subsection, we will observe the following. 

\begin{lemma}
\label{express_g}
For a given element $g$ in $G(K)$, 
we express it as the product of elements $g_1, \dots, g_m$ so that the following two conditions hold: 

$(A)$ $g_i$ is non-trivial and belongs to $\pi_1(X)$ or $\pi_1(E_j)$ for some $1 \le j \le n$, and 

$(B)$ $g_i$ and $g_{i+1}$ do not belong to the same factor group.
\end{lemma}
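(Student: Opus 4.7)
The plan is to exploit the graph-of-groups structure of $G(K)$ coming from the torus decomposition $E(K) = X \cup E_1 \cup \cdots \cup E_n$, and then argue via a minimal-length expression: minimality will automatically enforce both (A) and (B).

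First, I would apply van Kampen's theorem to the decomposition $E(K) = X \cup E_1 \cup \cdots \cup E_n$ glued along the tori $T_i$. Combined with the base-point and path conventions fixed in Subsection~\ref{convention}, this shows that the union $\pi_1(X) \cup \pi_1(E_1) \cup \cdots \cup \pi_1(E_n)$ generates $G(K)$. Consequently, any $g \in G(K)$ admits at least one expression of the form $g = g_1 g_2 \cdots g_m$ with each $g_i$ lying in one of the factor groups (taking the empty product when $g = 1$; the interesting case is $m \ge 1$).

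Next, I would choose such an expression with $m$ minimal. Condition (A) is then immediate: if some $g_i$ were trivial, deleting it would yield a strictly shorter expression, contradicting minimality. Condition (B) is also forced: if some $g_i$ and $g_{i+1}$ both lay in a common factor group $H$, then $g_i g_{i+1} \in H$, and replacing the consecutive pair by this single element would yield an expression of length $m-1$ (or $m-2$ in the degenerate case $g_i g_{i+1} = 1$), again contradicting minimality.

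The one delicate point to handle is that elements of an edge group $\pi_1(T_i) \subset \pi_1(X) \cap \pi_1(E_i)$ lie in two different factor groups simultaneously, so the assignment of a given $g_i$ to a specific factor group need not be unique. This is not a real obstacle: the merging step in the minimality argument only requires the \emph{existence} of some common factor group containing two consecutive $g_i$'s, so a length-minimal expression automatically has the property that no two consecutive terms share any factor group at all, which is precisely (B). I do not foresee a serious difficulty — the lemma is essentially a bookkeeping fact about reduced products in the graph of groups decomposition of $G(K)$, and the main work has already been done in fixing the base-point and path conventions of Subsection~\ref{convention}.
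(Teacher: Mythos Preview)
Your proposal is correct. The paper's own proof takes a more explicitly geometric route: it represents $g$ by a loop $c$ based at $p$, makes $c$ transverse to the tori $T_i$, and cuts $c$ into arcs lying alternately in $X$ and in the $E_j$'s; each arc is then closed up into a based loop using the fixed paths $u_i$ together with auxiliary paths $\tau_i$ on the $T_i$, producing an explicit expression of $g$ as a product of factor-group elements. Only after this construction does the paper perform the same combining step you do to enforce (A) and (B), supplemented by an explicit Convention that assigns any surviving edge-group element to a definite factor group (namely $\pi_1(X)$). Your argument replaces the loop-decomposition step by an appeal to van Kampen plus a minimal-length argument, which is cleaner and shorter; the paper's version is more constructive and, importantly, simultaneously establishes the Convention---used later in Lemma~\ref{regular_fiber} and Lemma~\ref{criterion}---that in a minimal expression ``$g_i \in \pi_1(E_j)$'' always means $g_i \in \pi_1(E_j) - \pi_1(T_j)$. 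Your minimality argument does yield this too, but you would need to state it explicitly for the downstream applications.
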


\begin{proof}
Let $g$ be a non-trivial element of $G(K)$. 
Recall that we have chosen the  path $u_i$ from the base point $p$ to $p_i$ in $X$, 
which specifies the subgroups $\pi_1(E_i)$ in $\pi_1(E(K)) = G(K)$. 

Assume that $g \in G(K)$ is represented by an oriented closed curve $c$ based at $p \in T = \partial E(K)$. 
We may assume $c$ intersects each torus $T_i$ transversely. 
Traveling along $c$ according to its orientation, 
we assign a point $q_1, q_2, \dots, q_{k-1}, q_k$ when we cross $T_1 \cup \cdots \cup T_n$. 
Then these points decompose $c$ as $a_1 \cup b_1\cup a_2 \cup b_2 \cup a_3 \cup \cdots \cup a_k$, 
where $a_i \subset X$ and $b_i$ lies in exactly one of $E_1, \dots, E_n$; see Figure~\ref{elements_graph_groups}(Left). 
Choose a path $\tau_i$ arbitrarily on $T_1 \cup \cdots \cup T_n$ from  $p_{\ell}$ to $q_i$.     
Eventually we have a decomposition of $g$ as 
$g = g_1 g_2 \cdots g_s$, 
where each $g_i$ is non-trivial and belongs to $\pi_1(X)$ or $\pi_1(E_j)$ for some $1 \le j \le n$ 
as described in Example~\ref{decomposition_g} below.

\begin{example}
\label{decomposition_g}
Suppose that $E(K) = X \cup E_1 \cup\cdots \cup E_n$ and 
$g \in G(K)$ is represented by a closed curve $c$ as in Figure~\ref{elements_graph_groups}(Right).  
Then $g$ is expressed as 
\[
(a_1 \ast \overline{\tau_1} \ast \overline{u_1}) \ast (u_1 \ast  \tau_1 \ast b_1 \ast \overline{\tau_2} \ast \overline{u_1}) \ast 
(u_1 \ast \tau_2 \ast  a_2 \ast \overline{\tau_3} \ast \overline{u_2}) \ast (u_2 \ast \tau_3 \ast b_2 \ast \overline{\tau_4} \ast \overline{u_2}) \ast  (u_2 \ast \tau_4 \ast a_3), 
\]
where 
$a_1 \ast \overline{\tau_1} \ast \overline{u_1},\ 
u_1 \ast \tau_2 \ast  a_2 \ast \overline{\tau_3} \ast \overline{u_2}, \ 
u_2 \ast \tau_4 \ast a_3$ represent elements in $\pi_1(X)$, 
$u_1 \ast  \tau_1 \ast b_1 \ast \overline{\tau_2} \ast \overline{u_1}$ represents an element in $\pi_1(E_1)$, 
and
$u_2 \ast \tau_3 \ast b_2 \ast \overline{\tau_4} \ast \overline{u_2}$ represents an element in $\pi_1(E_2)$.  
\end{example}

\begin{figure}[htb]
\centering
\includegraphics[width=1.0\textwidth]{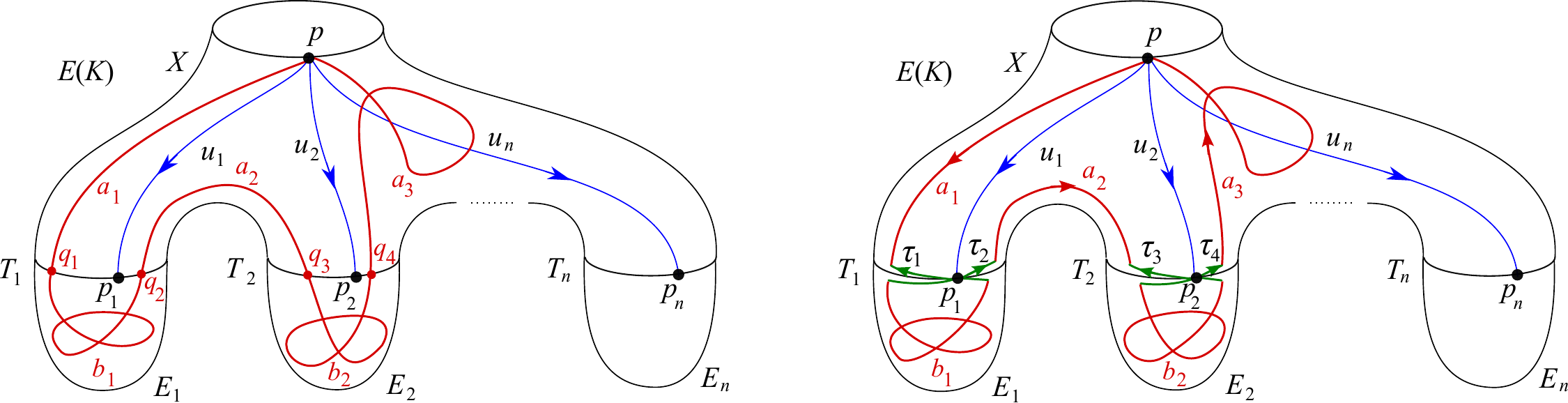}
\caption{Decomposition of elements as products of elements in factor groups.} 
\label{elements_graph_groups}
\end{figure}

\medskip

Finally if two successive elements $g_i$ and $g_{i+1}$ belong to the same factor group, 
then we combine them according to the following convention to get an expression satisfying conditions (A) and (B). 

\medskip

\noindent
\textbf{Convention.}
Suppose that $g_i$ belongs to $\pi_1(T_j)$ for some $j$. 
Then there are two possible choices:\  
$g_i \in \pi_1(X)$ or $g_i \in \pi_1(E_j)$.  
We regard $g_i \in \pi_1(X)$ or $g_i \in \pi_1(E_j)$ so that we successively combine as many $g_i$ as possible so that $g_i$ and $g_{i+1}$ do not belong to the same factor group. 
After this combining procedure, if we still have $g_i \in \pi_1(T_j)$ for some $j$, 
i.e. $g_{i-1} \in \pi_1(E_{j'}), g_{i+1} \in \pi_1(E_{j''})$ for some $j', j'' \ne j$, 
then we regard $g_i \in \pi_1(X)$ so that 
$g_i \in \pi_1(E_j)$ means that $g_i \in \pi_1(E_j) - \pi_1(T_j)$. 

\medskip 

Then we obtain a desired expression of $g \in G(K)$. 
\end{proof}

\medskip

We call an expression of $g$ satisfying conditions (A) and (B) a {\em minimal} expression of $g \in G(K)$. 
In the following, any expression of $g$ is always assumed to be minimal.  
Note that a minimal expression for the given element may not be unique. 

\medskip

\begin{example}
\begin{enumerate}
\item
Assume that $g_i \in \pi_1(X),\ g_{i+1} \in \pi_1(T_j)$ and $g_{i+2} \in \pi_1(X)$. 
Then combine them to obtain $g_ig_{i+1}g_{i+2} \in \pi_1(X)$. 
\item
Assume that 
$g_i \in \pi_1(E_j), g_{i+1} \in \pi_1(T_j), g_{i+2} \in \pi_1(T_{j'})$, and $g_{i+3} \in \pi_1(E_{j'})$ for $j' \ne j$. 
Then we combine 
$g_i g_{i+1} g_{i+2} g_{i+3} = (g_i g_{i+1})(g_{i+2} g_{i+3})$, 
where $g_i g_{i+1} \in \pi_1(E_j)$ and $g_{i+2} g_{i+3} \in \pi_1(E_{j'})$, 
or combine
$g_i g_{i+1} g_{i+2} g_{i+3} = g_i (g_{i+1}g_{i+2}) g_{i+3}$, 
where $g_i \in \pi_1(E_j), g_{i+1} g_{i+2}  \in \pi_1(X)$ and  $g_{i+3} \in \pi_1(E_{j'})$. 
\end{enumerate}
\end{example}

\medskip

\begin{lemma}
\label{regular_fiber}
Assume that $X$ is Seifert fibered, 
i.e. it is either a composing space or a cable space.
Let $g=g_1\cdots g_m$ be a minimal expression. If $m \ge 2$, and
$g_{i} \in \pi_1(X)$ for some $i$, then $g_i$ is not represented by a regular fiber.  
\end{lemma}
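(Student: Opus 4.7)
The plan is to argue by contradiction, exploiting the geometric fact that in a Seifert fibered piece the regular fiber is simultaneously represented on every boundary torus, so that it can always be absorbed into an adjacent factor $\pi_1(E_j)$ — contradicting the convention that defines a minimal expression.

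First I would observe that when $X$ is a composing space $P\times S^1$ or a cable space (Seifert fibered over an annulus with one exceptional fiber), the Seifert fibration of $X$ restricts on each boundary component to a foliation by regular fibers. Consequently, the class of a regular fiber determines, up to inversion, a distinguished element $t\in\pi_1(X)$ that lies in $\pi_1(T)\cap \pi_1(T_1)\cap \cdots\cap \pi_1(T_n)$ (after identifying these as subgroups of $G(K)$ via the base-path choices of Section~\ref{convention}). In particular every non-trivial power $t^k$ belongs to $\pi_1(T_j)\subset \pi_1(E_j)$ for every $j=1,\ldots,n$.

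Next, suppose toward a contradiction that in a minimal expression $g=g_1\cdots g_m$ with $m\ge 2$ some $g_i\in\pi_1(X)$ is represented by a regular fiber, so $g_i=t^{\pm 1}$. Since $m\ge 2$, at least one of $g_{i-1},g_{i+1}$ exists; by symmetry assume $g_{i+1}$ does. Condition (B) forces $g_{i+1}\notin\pi_1(X)$, so $g_{i+1}\in\pi_1(E_{j_0})$ for some $j_0$, and the convention underlying a minimal expression additionally guarantees $g_{i+1}\in \pi_1(E_{j_0})-\pi_1(T_{j_0})$. By the previous paragraph, however, $g_i\in\pi_1(T_{j_0})\subset\pi_1(E_{j_0})$, so by the convention we should have reassigned $g_i$ to $\pi_1(E_{j_0})$ and combined it with $g_{i+1}$ into a single factor $g_ig_{i+1}\in\pi_1(E_{j_0})$. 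This combined element is non-trivial, since $g_i^{-1}\in\pi_1(T_{j_0})$ while $g_{i+1}\notin\pi_1(T_{j_0})$, forcing $g_{i+1}\ne g_i^{-1}$. The resulting expression is therefore strictly shorter than the original, contradicting its minimality.

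The main obstacle I anticipate is not a deep one but a bookkeeping step: one must carefully confirm that the distinguished fiber class $t$ really lies in every $\pi_1(T_j)$ as a subgroup of $G(K)$ under the base-path identifications of Section~\ref{convention}, and then verify that the absorption step produces a bona fide minimal expression (possibly after a further routine merging with $g_{i-1}$ or $g_{i+2}$). In any case the length strictly drops, which is all that is needed to contradict minimality.
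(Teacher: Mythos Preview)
Your argument is correct and follows essentially the same route as the paper's proof: both observe that a regular fiber of the Seifert piece $X$ lies in every $\pi_1(T_j)$, hence can be absorbed into an adjacent factor $\pi_1(E_j)$, contradicting minimality. Your version is in fact slightly more careful than the paper's in verifying that the merged element $g_ig_{i+1}$ is non-trivial.
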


\begin{proof}
Suppose that $g_{i} \in \pi_1(X)$ is represented by a regular fiber of $X$. 
Then $g_{i} \in \pi_1(T_j) \subset \pi_1(E_j)$ for $j =1, 2,\dots, n$ (when $X$ is a composing space) and $g_i \in \pi_1(T_1) \subset \pi_1(E_1)$ (when $X$ is a cable space). 

Hence, the regular fiber belongs to every factor group, so one would
find a contradiction with condition (B) if $g_i$ is represented by a regular fiber.
\end{proof}

\bigskip

\section{Dehn filling trivialization}
\label{Dehn_filling}
Let $(\mu, \lambda)$ be a preferred meridian-longitude pair of $K$, which generate $\pi_1(T)$. 
Then every simple closed curve on $T$ represents $\mu^p \lambda^q \in \pi_1(T)$ for some coprime integers $p$ and $q$.  
Denote by $X(p/q)$ the $3$--manifold obtained from $X$ by
$p/q$--Dehn filling on $T$, namely, the 3-manifold obtained by 
 attaching a solid torus $V$ to $X$ along the boundary component $T = \partial E(K)$ so that the meridian of $V$ represents a slope element $\mu^p \lambda^q$. 

For $r=p/q$,
$r$--Dehn filling of $X$ induces an epimorphism 
\[
p_{X, r} \colon \pi_1(X) \to  \pi_1(X(p/q)) = \pi_1(X) / \langle\!\langle  \mu^p \lambda^q \rangle\!\rangle, 
\]
where $\langle\!\langle  \mu^p \lambda^q \rangle\!\rangle$ denotes the normal closure of $\mu^p \lambda^q$ in $\pi_1(X)$. 
Thus, $p/q$--Dehn filling trivializes elements in $\langle\!\langle  \mu^p \lambda^q \rangle\!\rangle \subset \pi_1(X)$. 
(In case of $X = E(K)$, we often abbreviate $p_{X, r}$ as $p_r$.) 

 In the context of knot group, Theorem~\ref{1/n} has the following paraphrase.  

\begin{remark}
\label{rephrase}
Let $K$ be a non-trivial knot. 
Then  
$\displaystyle\bigcap_{n \in I} \langle\!\langle 1/n \rangle\!\rangle = \{ 1 \}$ for any infinite subset $I$ of $\mathbb{Z}$. 
In particular, 
$\displaystyle\bigcap_{r \in \mathbb{Q}} \langle\!\langle r \rangle\!\rangle = \{ 1 \}$. 
\end{remark}

Now we assume that $X \ne E(K)$. 
If $X$ is Seifert fibered, 
then it is either a cable space or a composing space \cite{JS}. 
Correspondingly, $K$ is a cable knot or a composite knot.   
Denote $\partial X = T \cup T_1 \cup \cdots \cup T_n$. 
Note that each $T_i$ bounds a non-trivial knot exterior $E_i$ in $E(K)$ for $1 \le i \le n$. 
Then $E(K)$ is expressed as  $E(K) = X \cup E_1 \cup \cdots \cup E_n$. 
Recall that we have chosen and fixed a base-point $p_i \in T_i$ and a path $u_i$ in $X$ from $p$ to $p_i$ to define the subgroup $\pi_1(T_i),\pi_1(E_i)$. 
We classify non-trivial elements $g \in \pi_1(X)$ into three classes: 

\begin{itemize}
\item
$g \in \pi_1(X)$ is {\em outer-peripheral} if $g$ is conjugate into $\pi_1(T) = \pi_1(\partial E(K))$. 
An outer-peripheral element is conjugate to a power of some slope element. 

\item
$g \in \pi_1(X)$ is {\em inner-peripheral} 
if it is conjugate to an element of $\pi_1(T_i) \subset \pi_1(X)$ for some $i=1,\ldots, n$.  
 
\item
$g \in \pi_1(X)$ is {\em non-peripheral} if it is neither outer-peripheral nor inner-peripheral. 
\end{itemize}

We remark that the definition of the subgroup $\pi_1(T_i) \subset \pi_1(X)$ depends on a choice of a path $u_i$, but as we have seen, 
different paths yield conjugate subgroups, so these notions do not depend on the choice of path and are well defined. 

If $X = E(K)$, then there is no inner-peripheral element, and an outer-peripheral element is simply called a peripheral element. 

\medskip

Let $g$ be an outer-peripheral element of $\pi_1(X)$. 
Then $g = \alpha (\mu^a \lambda^b)^k \alpha^{-1}$ in $\pi_1(X)$ for some element $\alpha \in \pi_1(X)$ and integers $a, b$ and $k$. 
We call $g$ an outer-peripheral element of slope $a/b \in \mathbb{Q}\cup \{1/0\}$. 

\medskip

\subsection{Dehn fillings of a hyperbolic manifold}
\label{outer_piece_hyperbolic}

\begin{proposition}
\label{filling_X_hyperbolic}
Suppose that $X$ is hyperbolic. 
\begin{enumerate}
\item
Except for at most two non-trivial slopes $r \in \mathbb{Q}$, 
$X(r)$ is boundary irreducible, and hence $\pi_1(X(r))$, $\pi_1(E_j)$ inject into $\pi_1(K(r))$. 
\item
If $g \in \pi_1(T_j)$ for some $1 \le j \le n$, 
then except for at most two slopes $r \in \mathbb{Q}$, $1 \ne p_{X, r}(g)\in \pi_1(T_j) \subset \pi_1(X(r))$. 
\item  
If $g \in \pi_1(X) - \cup_{j=1}^n\pi_1(T_j)$, 
then $p_{X, r}(g) \in \pi_1(X(r)) - \cup_{j=1}^n\pi_1(T_j)$ for all but finitely many slopes $r \in \mathbb{Q}$. 
\end{enumerate}
\end{proposition}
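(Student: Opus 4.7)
My plan proceeds part by part. For part (1), the main tool is Thurston's hyperbolic Dehn surgery theorem, which guarantees that $X(r)$ is hyperbolic — and hence boundary irreducible — for all but finitely many $r$. To upgrade ``finitely many'' to ``at most two non-trivial slopes'', I would invoke Wu-type bounds on the distance between boundary-reducing slopes of a hyperbolic $3$-manifold with multiple torus boundary components (noting that for our $X$ the trivial slope $1/0$ is allowed to be a further exceptional slope). Once each $T_j$ is incompressible in $X(r)$, and since $T_j$ is automatically incompressible in the non-trivial knot exterior $E_j$, a Seifert--van Kampen argument on the decomposition $K(r) = X(r) \cup_{T_1} E_1 \cup \cdots \cup_{T_n} E_n$ yields the stated injectivity of $\pi_1(X(r))$ and each $\pi_1(E_j)$ into $\pi_1(K(r))$.

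For part (2), the incompressibility of $T_j$ in $X(r)$ from part (1), valid outside at most three slopes (the two non-trivial exceptional slopes plus the trivial slope), makes $\pi_1(T_j) \hookrightarrow \pi_1(X(r))$ injective. Hence a non-trivial $g \in \pi_1(T_j)$ has non-trivial image $p_{X,r}(g)$, lying by construction in the subgroup $\pi_1(T_j) \subset \pi_1(X(r))$.

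For part (3), the decisive ingredient is the Dehn filling theorem of Groves--Manning and Osin for relatively hyperbolic groups — the same engine underlying Theorem~\ref{GMO}. The group $\pi_1(X)$ is relatively hyperbolic with peripheral structure $\{\pi_1(T),\pi_1(T_1),\dots,\pi_1(T_n)\}$, and filling only the distinguished peripheral $\pi_1(T)$ by the normal closure of $\mu^p\lambda^q$ produces, for all but finitely many $r$, a relatively hyperbolic quotient $\pi_1(X(r))$ with remaining peripherals $\{\pi_1(T_1),\dots,\pi_1(T_n)\}$, together with injectivity of $p_{X,r}$ on any prescribed finite set and the property that non-peripheral elements go to non-peripheral elements. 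Applied to our $g \in \pi_1(X) - \bigcup_j \pi_1(T_j)$, this directly yields $p_{X,r}(g) \notin \bigcup_j \pi_1(T_j)$.

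The main obstacle will be extracting the precise numerical bounds ``two'' and ``three'' in parts (1) and (2): general finiteness is essentially automatic from Thurston's theorem, but the sharp counts require locating and invoking the correct Wu-type theorem on boundary-reducing slopes. A secondary subtlety in part (3) is to move between the conjugacy version of the Osin/Groves--Manning filling theorem (which excludes conjugate-into-peripheral) and the literal set-theoretic membership version appearing in the statement; this is mediated by the path-fixing convention of Section~\ref{convention}, which identifies the specific subgroup $\pi_1(T_j) \subset \pi_1(X(r))$ with the $p_{X,r}$-image of $\pi_1(T_j) \subset \pi_1(X)$ and thereby matches the two formulations.
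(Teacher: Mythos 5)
Your parts (1) and (2) are essentially the paper's argument: the sharp counts come from a single citation of \cite{CGLS} (Theorem~2.4.4 there bounds the boundary-reducing fillings), after which boundary-irreducibility gives the injectivity statements by the van Kampen argument you describe; your ``Wu-type bound'' worry is resolved by that citation, and in any case only finiteness is used downstream. The genuine gap is in part (3). The hypothesis $g \in \pi_1(X) - \bigcup_j \pi_1(T_j)$ does \emph{not} mean that $g$ is non-peripheral in the relatively hyperbolic sense: $g$ may be \emph{inner-peripheral}, i.e.\ of the form $\alpha\gamma\alpha^{-1}$ with $\gamma \in \pi_1(T_j)$ and $\alpha \notin \pi_1(T_j)$. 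Such a $g$ is conjugate into a peripheral subgroup, so the Groves--Manning/Osin conclusion that hyperbolic (non-peripheral) elements remain hyperbolic simply does not apply to it, and finite-set injectivity does not help either: ruling out $p_{X,r}(g) \in \pi_1(T_j)$ amounts to ruling out $g \in \pi_1(T_j)\cdot\langle\!\langle \mu^p\lambda^q\rangle\!\rangle$, an infinite coset condition. Your claim that the path-fixing convention of Section~\ref{convention} ``mediates'' between the conjugacy and membership formulations is where the gap hides: that convention only fixes which conjugate of $\pi_1(T_j)$ is named; it does not convert conjugate-into-peripheral into literally peripheral.

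Closing this requires an extra ingredient, namely malnormality of the cusp subgroups. The paper writes $g = \alpha\gamma\alpha^{-1}$, shows via Thurston's deformation of the holonomy representation (comparing parabolic/loxodromic types and fixed-point sets) that $p_{X,r}(\alpha) \notin \pi_1(T_j)$ for all but finitely many $r$, and then invokes malnormality of $\pi_1(T_j)$ in $\pi_1(X(r))$ \cite{HW} to conclude $p_{X,r}(\alpha)p_{X,r}(\gamma)p_{X,r}(\alpha)^{-1} \notin \pi_1(T_j)$; being conjugate into $\pi_1(T_j)$ then rules out membership in the other $\pi_1(T_{j'})$. (For genuinely non-peripheral $g$ the paper also avoids the relatively hyperbolic machinery, arguing directly that $\mathrm{Tr}\,\rho_{p,q}(g) \neq \pm 2$ persists under small deformation, and for outer-peripheral $g$ it quotes \cite[Proposition~3.1]{IMT_realization} together with the loxodromicity of $\rho_{p,q}(g)$.) Your relatively hyperbolic route can be made to work --- it is exactly the alternative the authors acknowledge after Theorem~\ref{prime_non-cable} --- but only if you supplement the basic filling theorem with a statement controlling cosets of the unfilled peripherals under long fillings (e.g.\ weak separation for relatively quasiconvex subgroups), which plays the role of malnormality; as written, the inner-peripheral case is unaddressed.
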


Before proving this proposition, 
for convenience of readers, we just briefly recall Thurston's hyperbolic Dehn surgery theory \cite{T1,T2}. 
By the assumption we have a holonomy (faithful and discrete) representation
\[
\rho \colon \pi_1(X) \to \mathrm{Isom}^+(\mathbb{H}^3) = \mathrm{PSL}(2, \mathbb{C}).
\]

We regard $\rho = \rho_{\infty}$ and denote its image by $\Gamma_{\infty} \cong \pi_1(X)$. 
Recall that $\rho(\mu)$ and $\rho(\lambda)$ are parabolic elements.  
By small deformation of the holonomy representation $\rho$ which keeps $\partial X - T$ as cusps (up to conjugation), 
we obtain representations 
$\rho_{x, y} \colon \pi_1(X) \to \mathrm{Isom}^+(\mathbb{H}^3) = \mathrm{PSL}(2, \mathbb{C})$ whose image is 
$\Gamma_{x, y}$. 

Deforming $\rho$ continuously, 
$(x, y)$ varies over an open set $U_{\infty}$ in $S^2 = \mathbb{R}^2 \cup \{ \infty \}$; see the proof of \cite[Theorem~5.8.2]{T1}. 
Note that under deformation, the trace of $\rho_{x, y}$ is also deformed continuously. 
By Mostow-Prasad rigidity theorem \cite{Mostow,Prasad} $\rho_{x, y}(\mu)$ and $\rho_{x, y}(\lambda)$ are not parabolic, i.e. loxodromic when $(x, y) \ne \infty$. 
Note that for any element $\gamma \in \pi_1(T_j)$, 
$\rho(\gamma)$ is parabolic and $\rho_{x, y}(\gamma)$ remains parabolic for all $1 \le j \le n$. 
When $(x, y)$ is $(p, q)$ for coprime integers $p$ and $q$ with $|p| + |q|$ sufficiently large, 
$\rho_{p, q}$ gives an incomplete hyperbolic metric of $\mathrm{int}X$ so that 
its completion is $X(p/q)$ in which $K_{p/q}^*$, the image of the filled solid torus, 
is the shortest closed geodesic in the hyperbolic $3$--manifold $X(p/q)$. 
For the non-faithful representation $\rho_{p, q}$ of $\pi_1(X)$, 
its image $\Gamma_{p, q}$ is regarded as $\pi_1(X(p/q))$. 
\[
\xymatrix{
\pi_1(X)\ar@{->>}[drr]_{\rho_{p,q}}  \ar[rr]^{\rho}_{\cong} &  & \hspace{0.2cm} \Gamma\subset \mathrm{Isom}^+(\mathbf{H}^3)\ar@{->>}[d] & \\
              &  &  \hspace{1.0cm}  \Gamma_{p,q}\subset \mathrm{Isom}^+(\mathbf{H}^3)
}
\]

Note that for an outer-peripheral element $\gamma$, 
$\rho(\gamma)$ is parabolic, while $\rho_{p, q}(\gamma)$ is loxodromic for $p/q$ with $|p| + |q|$ sufficiently large. 
If $\delta$ is inner-peripheral, then $\rho(\delta)$ and $\rho_{p, q}(\delta)$ are both parabolic. 
Let $g$ be a non-peripheral element of $\pi_1(X)$, 
then $\rho(g)$ and $\rho_{p, q}(g)$ are loxodromic. 

\begin{proof}[Proof of Proposition~\ref{filling_X_hyperbolic}]
$(1)$ Following \cite[Theorems~2.4.4 and 2.4.5]{CGLS}, $X(r)$ is boundary-reducible for at most two slopes $r \in \mathbb{Q}$. 

$(2)$ Assume that $g$ belongs to $\pi_1(T_j)$ for some $1 \le j \le n$. 
Since $X(r)$ is boundary-irreducible except for at most two non-trivial slopes, 
$p_{X, r}(g)$ is non-trivial in $\pi_1(X(r))$ except for at most two non-trivial slopes. 

$(3)$ 
First we assume that $g$ is outer-peripheral. 
If $p_{X, r}(g) = 1$ in $\pi_1(X(r))$, 
then $p_r(g) = 1 \in \pi_1(K(r))$. 
Following \cite[Proposition~3.1]{IMT_realization}, 
$r$ is a finite surgery slope or $r$ is the slope of $g$. 
Since $K$ has only finitely many finite surgery slopes, 
$p_{X,r}(g) \ne 1$ for all but finitely many slopes $r \in \mathbb{Q}$. 
Furthermore, 
following the above observation, 
$g$ becomes a loxodromic element except for only finitely many slopes. 
Hence, 
$p_{X,r}(g)$ is non-trivial and non-peripheral in $\pi_1(X(r))$ for all but finitely many slopes $r \in \mathbb{Q}$. 

\smallskip

Suppose that $g$ is inner-peripheral, but $g \not\in \pi_1(T_j)$ for $1 \le j \le n$. 

By the assumption we may write 
$g = \alpha \gamma \alpha^{-1}$ , where $\alpha \in \pi_1(X)$ and $\gamma \in \pi_1(T_j)$ for some $1 \le j \le n$. 
Since $g \not\in \pi_1(T_j)$, $\alpha \not\in \pi_1(T_j)$. 
We will show that $p_{X, r}(g) \not \in \pi_1(T_j)$ $(1 \le j \le n)$ for all but finitely many slopes $r$. 
Divide our argument into three cases according as 
(i) $\alpha$ is non-peripheral, 
(ii) $\alpha$ is outer-peripheral, or 
(iii) $\alpha$ is inner-peripheral. 

Let us take a holonomy representation $\rho \colon \pi_1(X) \to \mathrm{Isom}^+(\mathbb{H}^3) = \mathrm{PSL}(2, \mathbb{C})$. 

(i) $\alpha$ is non-peripheral. 
Then $\rho(\alpha)$ is loxodromic, $\rho(\gamma)$ is parabolic,  
and after small deformation, $\rho_{x, y}(\alpha)$ remains loxodromic and $\rho_{x, y}(\gamma)$ remains parabolic. 
Hence, $\rho_{x, y}(\alpha)$ and $\rho_{x, y}(\gamma)$ do not commute and $p_{X, r}(\alpha) \in \pi_1(X(r))$ does not belong to $\pi_1(T_j)$. 
Since $\pi_1(T_j)$ is malnormal in $\pi_1(X(r))$ \cite[Theorem~3]{HW}, 
$p_{X, r}(g) = p_{X, r}(\alpha) p_{X, r}(\gamma) p_{X, r}(\alpha)^{-1} \not\in \pi_1(T_j)$.

(ii) $\alpha$ is outer-peripheral. 
Then $\rho(\alpha)$ is parabolic, $\rho(\gamma)$ is parabolic,  
and after small deformation, $\rho_{x, y}(\alpha)$ becomes loxodromic, while $\rho_{x, y}(\gamma)$ remains parabolic. 
Hence, $\rho_{x, y}(\alpha)$ and $\rho_{x, y}(\gamma)$ do not commute and $p_{X, r}(\alpha) \in \pi_1(X(r))$ does not belong to $\pi_1(T_j)$. 
As in (i) we see that 
$p_{X, r}(g) = p_{X, r}(\alpha) p_{X, r}(\gamma) p_{X, r}(\alpha)^{-1} \not\in \pi_1(T_j)$. 

(iii) $\alpha$ is inner-peripheral. 
Then $\rho(\alpha)$ and $\rho(\gamma)$ are both parabolic, and 
after small deformation $\rho_{x, y}(\alpha)$ and $\rho_{x, y}(\gamma)$ remain parabolic. 
Since $\gamma \in \pi_1(T_j)$ and $g = \alpha \gamma \alpha^{-1} \not\in \pi_1(T_j)$,  
we have $\alpha \not\in \pi_1(T_j)$. 
Hence $\mathrm{Fix}(\rho(\alpha)) \ne \mathrm{Fix}(\rho(\gamma))$, 
where $\mathrm{Fix}(\cdot)$ denotes the fixed point set of $\cdot$ in $S^2_{\infty}$, the sphere at infinity.
After small deformation, we have still $\mathrm{Fix}(\rho_{x, y}(\alpha)) \ne \mathrm{Fix}(\rho_{x, y}(\gamma))$. 
This means that $p_{X, r}(\alpha) \not\in \pi_1(T_j)$, while $p_{X, r}(\gamma) \in \pi_1(T_j)$. 
So malnormality of $\pi_1(T_j)$ in $\pi_1(X)$ shows that $p_{X, r}(\alpha) p_{X, r}(\gamma) p_{X, r}(\alpha)^{-1} \not\in \pi_1(T_j)$. 

Furthermore, in either case, 
since $p_{X, r}(\alpha) p_{X, r}(\gamma) p_{X, r}(\alpha)^{-1}$ is conjugate to 
$p_{X, r}(\gamma) \in \pi_1(T_j)$,   
we see that $p_{X, r}(\alpha) p_{X, r}(\gamma) p_{X, r}(\alpha)^{-1} \not\in \pi_1(T_{j'})$ for $j' \ne j$. 
This shows that  $p_{X, r}(\alpha) p_{X, r}(\gamma) p_{X, r}(\alpha)^{-1} \not\in \pi_1(T_{j})$ for all $j = 1, \dots, n$. 

\smallskip

Finally we  assume that $g$ is a non-peripheral element in $\pi_1(X)$, 
and prove that it remains non-trivial and non-peripheral in $\pi_1(X(r))$ for all but finitely many slopes $r$. 

Since $g$ is non-peripheral, $\rho(g)$ is loxodromic, and $\mathrm{Tr}(\rho(g)) \ne \pm 2$.

Under small deformation of $\rho$, 
$\mathrm{Tr}(\rho_{x, y}(g))$ deforms continuously according to $(x, y) \in U_{\infty}$. 
Hence for $p/q$ with $|p| + |q|$ sufficiently large, 
$\mathrm{Tr}(\rho_{p, q}(g)) \ne \pm 2$. 
In particular, $\rho_{p, q}(g)$ is non-parabolic. 
Hence $p_{p/q}(g)$ remains non-trivial and non-peripheral for all but finitely many slopes. 
\end{proof}

\begin{remark}
\label{K_hyperbolic}
If $K$ is a hyperbolic knot, then $X=E(K)$ and $T_j = \emptyset$ $(1 \le j \le n)$, 
and Proposition~\ref{filling_X_hyperbolic} (3) shows that every non-trivial element of $G(K)$ remains non-trivial in $\pi_1(K(r))$ for all but finitely many slopes $r \in \mathbb{Q}$. 
\end{remark}

\medskip

In the following subsections~\ref{outer_piece_composing}, \ref{outer_piece_cable} and \ref{torus_knot}, 
for our purpose we restrict our attention to $1/n$--Dehn fillings. 

\subsection{Dehn fillings of a composing space}
\label{outer_piece_composing}

We say that $X$ is a $k$--fold composing space if 
it is homeomorphic to $[\textrm{disk with $k$ holes}] \times S^1$, 
which arise in the torus decomposition of the exterior of a composite knot $K = K_1\; \#\; K_2\; \# \cdots \#\; K_k$, 
where each $K_i$ is prime. 
For our purpose it is convenient to regard $K = K_1\; \#\; K'_2$, where $K'_2 = K_2\; \# \cdots \#\; K_k$, and correspondingly decompose $E(K) = X \cup E(K_1) \cup E(K'_2)$, where $X$ is the $2$--fold composing space.  
Note that this does not give the torus decomposition. 
For non-prime knots $K$, in the following we consider such a decomposition of $E(K)$. 

\begin{proposition}
\label{filling_X_composing}
Suppose that $X$ is the $2$--fold composing space with $\partial X = T \cup T_1 \cup T_2$, 
where $T = \partial E(K)$.  
Assume that $n$ is a positive integer. 
\begin{enumerate}
\item
$X(1/n)$ is boundary irreducible, and hence $\pi_1(X(1/n))$, 
$\pi_1(E_j)$ inject into $\pi_1(K(1/n))$. 
\item
If $g \in \pi_1(T_j)$ for some $1 \le j \le 2$, 
then $p_{X, r}(g)$ is non-trivial in $\pi_1(T_j) \subset \pi_1(X(1/n))$. 
\item
If $g \in \pi_1(X) -\pi_1(T_1) \cup \pi_1(T_2)$, 
then there exists a constant $N_g > 0$ such that 
$p_{X, 1/n}(g) \in \pi_1(X(1/n)) - \left(\pi_1(T_1) \cup \pi_1(T_2)\right)$ for all $n \ge N_g$.  
\end{enumerate}
\end{proposition}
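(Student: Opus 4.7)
The plan is to compute $\pi_1(X(1/n))$ explicitly—$X=P\times S^1$ for $P$ a pair of pants has a very transparent presentation—and then to reduce each of (1)--(3) to a combinatorial question in a one-relator quotient. I would write
\[
\pi_1(X)=F_2\times\mathbb{Z}=\langle a,b,h\mid [a,h],\ [b,h]\rangle,
\]
with $h$ the regular fiber (equal to $\mu_K$, since the Seifert fibers of $X$ are meridians of $K$) and $a,b$ loops around the two inner boundary circles, so that $\pi_1(T_1)=\langle a,h\rangle$, $\pi_1(T_2)=\langle b,h\rangle$ and $\pi_1(T)=\langle ab,h\rangle$. A Mayer--Vietoris calculation of $H_1(E(K))$—in which $a$ and $b$ die as longitudes viewed from the $E_j$-side—forces $\lambda_K=ab$, so the $1/n$-slope element is $h(ab)^n$. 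Setting $c=ab$ and eliminating $h=c^{-n}$ collapses the presentation to
\[
\pi_1(X(1/n))=\langle a,b\mid [a,(ab)^n]\rangle=\langle a,c\mid [a,c^n]\rangle,
\]
in which $c^n$ is central (the relation $[b,(ab)^n]=1$ is automatic once $[a,(ab)^n]=1$ holds). This realises $\pi_1(X(1/n))$ as a central $\langle c^n\rangle\cong\mathbb{Z}$-extension of $Q_n:=\langle a,c\mid c^n\rangle\cong\mathbb{Z}*\mathbb{Z}/n$, matching the fact that $X(1/n)$ is Seifert fibered over an annulus with one cone point of order $n$.

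Granting this, parts (1) and (2) are short. The image of $\pi_1(T_1)$ in $\pi_1(X(1/n))$ is $\langle a,c^n\rangle$, which is abelian thanks to $[a,c^n]=1$, and whose generators stay $\mathbb{Z}$-independent in the abelianisation $H_1(X(1/n))\cong\mathbb{Z}^2$; hence $\pi_1(T_1)\hookrightarrow\pi_1(X(1/n))$ as a rank-$2$ free abelian subgroup, and the same goes for $\pi_1(T_2)$. Both tori are therefore incompressible in $X(1/n)$, giving the boundary irreducibility, and Bass--Serre theory applied to the graph-of-groups decomposition $K(1/n)=X(1/n)\cup_{T_1}E_1\cup_{T_2}E_2$ along incompressible tori then produces the injections $\pi_1(X(1/n))\hookrightarrow\pi_1(K(1/n))$ and $\pi_1(E_j)\hookrightarrow\pi_1(K(1/n))$ required by (1); assertion (2) is then immediate from the injection $\pi_1(T_j)\hookrightarrow\pi_1(X(1/n))$.

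The hard part is (3). Writing $g=wh^k$ with $w\in F_2$, the hypothesis $g\notin\pi_1(T_1)\cup\pi_1(T_2)$ forces $w\notin\langle a\rangle\cup\langle b\rangle$ in $F_2$, and $p_{X,1/n}(g)=w\cdot c^{-nk}$ in $\pi_1(X(1/n))$. A brief computation using the central extension shows that $p_{X,1/n}(g)\in\pi_1(T_1)$ (resp.\ $\pi_1(T_2)$) if and only if the image $\bar w$ of $w$ in $Q_n$ lies in $\langle a\rangle$ (resp.\ $\langle a^{-1}c\rangle=\langle b\rangle$) inside $\mathbb{Z}*\mathbb{Z}/n$. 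I would close with a normal-form estimate in this free product: after rewriting $w$ via $b=a^{-1}c$ one obtains a reduced word in $\langle a,c\rangle=F_2$ whose $c$-syllable exponents $j_i$ all satisfy $|j_i|\le|w|_{a,b}$, so as soon as $n>|w|_{a,b}+1$ no $c^n$-reduction is possible in $Q_n$ and the $Q_n$-normal form of $\bar w$ has exactly the same syllable pattern as the $F_2$-reduced form of $w$. Comparing this pattern with the explicit normal forms of $a^m$ and $(a^{-1}c)^m=b^m$ in $\mathbb{Z}*\mathbb{Z}/n$ then forces $w$ itself to lie in $\langle a\rangle$ or $\langle b\rangle$, contradicting the hypothesis; taking $N_g:=|w|_{a,b}+2$ suffices. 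The main obstacle is precisely this length-based normal-form comparison in $\mathbb{Z}*\mathbb{Z}/n$; once it is in place, everything else is algebraic bookkeeping.
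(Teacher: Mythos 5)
Your proposal is correct and follows essentially the same route as the paper: both pass from the product presentation $F_2\times\mathbb{Z}$ of $\pi_1(X)$ to the quotient $\mathbb{Z}\ast\mathbb{Z}_n$ of $\pi_1(X(1/n))$ by the central fiber subgroup, and both conclude via free-product normal forms once $n$ exceeds the relevant exponent bound of the word. The only (cosmetic) differences are that you treat the outer-peripheral case $g\in\pi_1(T)$ uniformly inside the normal-form argument, whereas the paper handles it separately via the exceptional-fiber index, and you justify boundary-irreducibility by an $H_1$ computation rather than by quoting the Seifert fibered structure.
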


\begin{proof}
Recall that in the (product) Seifert fibration of $X$, 
the meridian $\mu$ of $K$ is a regular fiber. 
Thus $X(1/n)$ is a Seifert fiber space over the annulus with at most one exceptional fiber of index $n$ which is the dual of $K$ (the image of the core of the attached solid torus). 
Precisely, $X(1/n)$ is $S^1 \times S^1 \times [0, 1]$ if $n = 1$, 
otherwise $X(1/n)$ is a Seifert fiber space over the annulus with a single exceptional fiber $K^*$ of index $n \ge 2$. 

(1) The above observation shows that $X(1/n)$ is boundary-irreducible. 

\medskip

(2) Assume that $g$ belongs to $\pi_1(T_j) \subset \pi_1(X)$. 
Following (1), $p_{X, 1/n}(g)$ is non-trivial in $\pi_1(X(1/n))$. 

\medskip

(3) 
Let us consider the following presentation of $\pi_1(X)$: 
\[
\langle c, d, t \mid [c, t]= [d, t] = 1 \rangle \cong (\mathbb{Z} \ast \mathbb{Z}) \oplus \mathbb{Z}; 
\]
$c = \lambda \in \pi_1(\partial E(K))$, $d \in \pi_1(T_1)$ and $cd \in \pi_1(T_2)$,  
and $t = \mu \in \pi_1(\partial E(K))$ is represented by a regular fiber.  
Here we choose a specific conjugacy representatives, i.e. a specific path $u_i$ introduced in Section~\ref{fundamental_groups} to obtain the above presentation.

Assume first that $g \in \pi_1(T) = \pi_1(\partial E(K))$. 
Then $g$ is written as $\mu^a \lambda^b = t^a c^b$ for some integers $a, b$ which may not be coprime. 
If $b = 0$, then $g = t^a$ belongs to $\pi_1(T_i)$ $(i = 1, 2)$, contradicting the assumption. 
So in the following we assume that $b \ne 0$.

Since $t = \mu$ and $c = \lambda$, 
in $X(1/n)$, a meridian of the filled solid torus represents $t c^n$, 
and  
$\pi_1(X(1/n))$ has a presentation:
\[
\langle c, d, t \mid [c, t]= [d, t] = 1,\  tc^{n} = 1 \rangle. 
\]

Taking the quotient by the normal subgroup $\langle t \rangle$, 
we have a projection 
\[
\pi \colon \pi_1(X(1/n)) \to \pi_1(X(1/n))/\langle t \rangle,
\] 
where $\pi_1(X(1/n))/\langle t \rangle$ has the presentation: 
\[
\langle c, d \mid  c^{n} = 1 \rangle \cong \langle c \mid c^{n} = 1 \rangle \ast \langle  d \rangle \cong  \mathbb{Z}_{n} \ast \mathbb{Z}. 
\]

Now let us take $n$ so that $n > |b|$.  
Then $\pi(p_{1/n}(g)) = c^b$ is non-trivial and cannot be any power of $d$ or $cd$ in $\pi_1(X(1/n))/\langle t \rangle$. 
This means that $p_{1/n}(g) \not\in \pi_1(T_i)$ for $i = 1, 2$.

\smallskip

Let us suppose that $g \not\in \pi_1(T)$. 
Thus $g \in \pi_1(X) - \left(\pi_1(T) \cup \pi_1(T_1) \cup \pi_1(T_2)\right)$ 
by the assumption. 
We show that there exists a constant $N _g > 1$ such that $p_{X, 1/n}(g)$ is non-trivial and $p_{X, 1/n}(g) \in \pi_1(X(1/n)) - \left(\pi_1(T_1)\cup \pi_1(T_2)\right)$
for all $n \ge N_g$. 

Since $t$ is central, 
$g \in \pi_1(X)$ is written as $t^k w(c, d)$ for some integer $k$ and for some word $w(c, d)$ of $c$ and $d$. 
Furthermore, since $g \in \pi_1(X) - \left(\pi_1(T) \cup \pi_1(T_1)\cup \pi_1(T_2)\right)$, 
the word $w(c, d)$ is non-trivial, 
and both $c$ and $d$ appear in $w(c, d)$, 
and $g$ is not a power of $c$, $d$ or $cd$. 
(Note that $t$ belongs to $\pi_1(T), \pi_1(T_1)$ and $\pi_1(T_2)$.)
Let us continue to write $p_{X,1/n}(g) = t^k w(c, d) \in \pi_1(X(1/n))$.
To see that it remains non-trivial and non-peripheral, 
we look at $\pi(p_{X,1/n}(g)) = \pi(w(c, d))$ in $\pi_1(X(1/n))/\langle t \rangle$.

Recall that in the given reduced word $w(c, d)$, 
both $c$ and $d$ appear and $w(c, d)$ is none of a power of $c, d$ or $cd$. 
Write $w(c, d) = c^{p_1} d^{q_1} \cdots c^{p_k} d^{q_k} c^{p_{k+1}}$ for some integers $p_1, q_1 \dots p_{k+1}$, 
where $p_1$, $p_{k+1}$ may be $0$, while others are not $0$. 
 
Set $p_c = \mathrm{max}\{ |p_1|, \dots , | p_k |, | p_{k+1}| \}$ and 
take $n$ so that $n > p_c + 1$. 
Then $c^{p_i} \ne 1 \in \mathbb{Z}_n$ whenever $p_i \ne 0$. 
Hence, $\pi(w(c, d))$ is non-trivial and not a power of $d$ in $\pi_1(X(1/n))/\langle t \rangle$, 
which implies that $p_{X, 1/n}(g) \ne 1 \in \pi_1(X(1/n))$ and $p_{X, 1/n}(g) \not\in \pi_1(T_1)$. 
Let us see if $\pi(w(c, d))$ is a power of $cd$ in $\pi_1(X(1/n))/\langle t \rangle$. 
If $\pi(w(c, d))$ is a power of $cd$, then it is either $c^{-p}d c^{-p}d \cdots c^{-p}d$ or $d^{-1}c^p d^{-1}c^p \cdots d^{-1}c^p$ and $n = p+1$. 
In the former case $\pi(w(c, d)) = cd \cdots cd \in \mathbb{Z}_{p+1} \ast \mathbb{Z}$ 
and in the latter case $\pi(w(c, d)) = d^{-1}c^{-1}\cdots d^{-1}c^{-1} \in \mathbb{Z}_{p+1} \ast \mathbb{Z}$, which are powers of $cd$. 
However, since $n > p_c + 1$, such situations cannot happen. 
(This situation was pointed by the referee.)

Hence, $p_{X, 1/n}(g)$ is non-trivial and not contained in $\pi_1(T_j)$ if $n \ge N_g = \mathrm{max}\{|b| + 1,\  p_c + 2\}$. 
\end{proof}

\medskip

\subsection{Dehn fillings of a cable space}
\label{outer_piece_cable}

We say that $K$ is a $(p, q)$--cable of a non-trivial knot $k$ if $K$ is embedded in the boundary of a small tubular neighborhood of $k$ such that it wraps $p$ time in meridional direction and $q$ times in longitudinal direction; for non-triviality we assume $q \ge 2$. 

\begin{proposition}
\label{filling_X_cable}
Suppose that $X$ is a $(p, q)$--cable space with $\partial X = T \cup T_1$, 
where $T = \partial E(K)$.  
Assume that $n$ is an integer greater than $1$. 
\begin{enumerate}
\item
$X(1/n)$ is boundary irreducible, and hence $\pi_1(X(1/n))$, $\pi_1(E_j)$ inject into $\pi_1(K(1/n))$. 
\item
If $g \in \pi_1(T_1)$, 
then $p_{X, 1/n}(g)$ is non-trivial in $\pi_1(T_1) \subset \pi_1(X(1/n))$. 
\item
If $g \in \pi_1(X) -\pi_1(T_1)$, 
then 
there exists a constant $N_g > 0$ such that 
$p_{X, 1/n}(g) \in \pi_1(X(1/n)) - \pi_1(T_1)$ for all $n$ with $n \ge N_g$.  
\end{enumerate}
\end{proposition}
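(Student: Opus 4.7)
The proof parallels that of Proposition~\ref{filling_X_composing}, with the exceptional fiber of the cable space contributing a torsion relation. My plan is to adopt the Seifert-fiber presentation
\[
\pi_1(X)=\langle \alpha,\beta,t \mid [\alpha,t]=[\beta,t]=1,\ (\alpha\beta)^q=t^p\rangle,
\]
with $t$ the central regular fiber. Since $K$ is a $(p,q)$-cable, the regular fiber on $T=\partial E(K)$ represents $\mu^{pq}\lambda$; I may take $\mu=\alpha$ and $\lambda=\alpha^{-pq}t$, so that $\pi_1(T)=\langle\alpha,t\rangle$ and $\pi_1(T_1)=\langle\beta,t\rangle$. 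The $1/n$-filling slope $\mu\lambda^n=\alpha^{1-pqn}t^n$ has intersection number $|1-pqn|\ne 0$ with the regular-fiber slope, so $X(1/n)$ inherits a Seifert fibration over the disk with two exceptional fibers of coprime orders $q$ and $|1-pqn|$ (coprimality follows from $1-pqn\equiv 1\pmod q$). This makes $X(1/n)$ irreducible with $T_1$ incompressible, yielding part (1) via Bass--Serre theory, and part (2) is then immediate from the injection $\pi_1(T_1)\hookrightarrow\pi_1(X(1/n))$.

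For part (3), centrality of $t$ gives a unique decomposition $g=t^k w$ with $w$ a representative in $\pi_1(X)/\langle t\rangle=\langle\alpha,\beta\mid(\alpha\beta)^q=1\rangle\cong\mathbb{Z}\ast\mathbb{Z}_q$, where I substitute $\gamma:=\alpha\beta$ so that $\beta=\alpha^{-1}\gamma$. Since $\pi_1(T_1)=\langle\beta,t\rangle$ is abelian and $t$ is central, the condition $g\in\pi_1(T_1)$ translates to $w\in\langle\beta\rangle=\langle\alpha^{-1}\gamma\rangle\subset\mathbb{Z}\ast\mathbb{Z}_q$; the analogous equivalence holds after filling, with target $\pi_1(X(1/n))/\langle t\rangle\cong\mathbb{Z}_{|1-pqn|}\ast\mathbb{Z}_q$. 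I then take $N_g$ strictly larger than the maximum absolute value of the $\alpha$-exponents occurring in the normal form of $w$ in $\mathbb{Z}\ast\mathbb{Z}_q$. For $n\ge N_g$, no $\alpha$-syllable of $w$ becomes trivial modulo $|1-pqn|$, so the projection $\mathbb{Z}\ast\mathbb{Z}_q\twoheadrightarrow\mathbb{Z}_{|1-pqn|}\ast\mathbb{Z}_q$ preserves the normal form of $w$. Since powers of $\beta=\alpha^{-1}\gamma$ have the rigid alternating normal form $(\alpha^{-1}\gamma)^m$ with every $\alpha$-exponent equal to $\pm 1$, the image of $w$ is a power of $\beta$ only if $w$ was already, which is excluded by the hypothesis. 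The outer-peripheral subcase $g=\mu^a\lambda^b\in\pi_1(T)\setminus\pi_1(T_1)$ is absorbed: here $w=\alpha^{a-pqb}$ is either trivial (forcing $g=t^b\in\pi_1(T_1)$, a contradiction) or a nontrivial length-one element, hence not a power of the length-two element $\alpha^{-1}\gamma$.

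The main technical obstacle I anticipate is the normal-form comparison: verifying that syllables of $w$ of bounded $\alpha$-exponent survive non-trivially under the exponent-reduction map to $\mathbb{Z}_{|1-pqn|}\ast\mathbb{Z}_q$, and carefully bookkeeping the Seifert invariants $(p,q)$ together with orientation conventions so that the regular-fiber slope on $T$ really is $\mu^{pq}\lambda$. These are essentially the same technical points as in the composing-space proof, now adapted to a free product of two cyclic groups (both possibly finite) rather than $\mathbb{Z}_n\ast\mathbb{Z}$.
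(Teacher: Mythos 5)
Your proposal is correct and follows essentially the same route as the paper: the same Seifert-fibered presentation of the cable space (up to a change of generators), passage to the quotient by the central regular fiber, and a normal-form argument in the resulting free product $\mathbb{Z}_{|pqn-1|}\ast\mathbb{Z}_q$ with $N_g$ chosen to dominate the $\alpha$-exponents. The only real difference is that you absorb the outer-peripheral subcase $g\in\pi_1(T)$ into the general word argument (a single $\alpha$-syllable cannot be a power of the length-two element $\alpha^{-1}\gamma$), whereas the paper treats it separately via an asymptote computation showing $|an-b|$ is eventually not a multiple of $|pqn-1|$; your version is a mild streamlining, not a different method.
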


\begin{proof}
Since $n \ge 2$, $|pqn-1| \ge 3  > 2$. 
Then, after $1/n$--Dehn filling,  
$X(1/n)$ is a Seifert fiber space over the disk with two exceptional fibers of indices $q$ and $|pqn-1| > 2$. 

$(1)$ 
The above observation shows that  $X(1/n)$ is boundary-irreducible. 

\medskip

$(2)$ Assume that $g \in \pi_1(T_1)$. 
Following (1),  
$g$ remains non-trivial in $\pi_1(X(1/n))$. 

\medskip

$(3)$ 
Using a Seifert structure of $X$, we have the following presentation of $\pi_1(X)$. 
\[
\langle c_1, c_2, t \mid [c_1, t]= [c_2, t] = 1,\ c_2^qt^\alpha = 1 \rangle,  
\]
where $t = \mu^{pq} \lambda \in \pi_1(T) = \pi_1(\partial E(K))$ is represented by a regular fiber, which belongs to $\pi_1(T_1)$ as well, 
$c_1 \in \pi_1(T)$ and $t$ generate $\pi_1(T)$, and $c_1c_2 \in \pi_1(T_1)$ and $t$ generate $\pi_1(T_1)$. 
Here we choose a specific conjugacy representatives, i.e. a specific path $u_i$ introduced in Section~\ref{fundamental_groups} to obtain the above presentation.

\medskip

Suppose that $g \in \pi_1(T)$. 
Then $g$ is expressed as $c_1^a t^b$ for some integers $a, b$, which may not be coprime. 
If $a = 0$, then $g = t^b \in \pi_1(T_1)$, contradicting the assumption. 
So in the following, we assume that $a \ne 0$. 

To get a presentation of $\pi_1(X(1/n))$, 
write $c_1 = \mu^x\lambda^y$ for some coprime integers $x, y$ such that $|pqy - x| =1$. 
Then, $1/n$--Dehn filling gives a relation $\mu \lambda^n$, which can be re-written by $c_1^{pqn-1} t^\beta = 1$ for some integer $\beta$. 
Thus we obtain the following presentation of $\pi_1(X(1/n))$: 
\[
\langle c_1, c_2, t \mid [c_1, t]= [c_2, t] = 1,\ c_2^qt^\alpha = 1,\ c_1^{pqn-1} t^\beta = 1 \rangle. 
\]

Taking the quotient by the normal subgroup $\langle t \rangle$, 
we have a projection 
\[
\pi \colon \pi_1(X(1/n)) \to \pi_1(X(1/n))/\langle t \rangle,
\] 
where $\pi_1(X(1/n))/\langle t \rangle$ has the presentation: 
\[
\langle c_1, c_2 \mid c_2^q = 1, c_1^{|pqn-1|} = 1 \rangle = \mathbb{Z}_q \ast \mathbb{Z}_{|pqn-1|}. 
\]

Now let us take $n$ so that $|pqn -1 | > |a|$. 

If $p_{X, 1/n}(g) \in \pi_1(T_1)$, 
then $\pi(p_{X, 1/n}(g)) = \pi(c_1^a t^b) = c_1^a$ is non-trivial and cannot be a power of $c_1c_2$ in  
$\pi_1(X(1/n))/\langle t \rangle$. 
This means that $p_{X, 1/n}(g) \not\in \pi_1(T_1)$. 

\medskip

Assume that $g \not\in \pi_1(T)$. 
Then $g \in \pi_1(X) - \left(\pi_1(T) \cup \pi_1(T_1)\right)$. 

For a given $g \in \pi_1(X)$, 
since $t$ is central, 
we may write $g = t^k w(c_1, c_2)$ for some integer $k$ and for some word $w(c_1, c_2)$ of $c_1$ and $c_2$. 
Further, we write $w(c_1, c_2) = c_1^{p_1} c_2^{q_1} \cdots c_1^{p_{\ell}} c_2^{q_{\ell}} c_1^{p_{\ell+1}} \in \pi_1(X)$ for some integers $p_1, q_1 \dots p_{\ell}, q_{\ell}, p_{\ell+1}$, 
where $p_1$, $p_{\ell+1}$ may be $0$, while others are not $0$. 
Set $N_{g, c_1} = \mathrm{max}\{ |p_1|, \dots , |p_{\ell}|, |p_{\ell+1}| \}$, 
and $N_{g, c_2} = \mathrm{max}\{ |q_1|, \dots , |q_{\ell}| \}$. 
Applying the relations $[c_1, t]= [c_2, t] = 1$ and $c_2^q = t^{-\alpha}$, 
we may assume $N_{g, c_2} < q$. 

In the expression $g = t^k w(c_1, c_2)
= t^k c_1^{p_1} c_2^{q_1} \cdots c_1^{p_{\ell}} c_2^{q_{\ell}} c_1^{ p_{\ell+1}} \in \pi_1(X)$, 
since $g \in \pi_1(X) - \left(\pi_1(T) \cup \pi_1(T_1)\right)$, 
the word $w(c_1, c_2)$ is non-trivial, and not a power of $c_1$ or $c_1c_2$. 
Note that $c_2$ appears in $w(c_1, c_2)$, and 
as noted above, 
we may also assume $|q_i| < q$.  

Let us show that $p_{X,1/n}(g)$ is non-trivial and 
not contained in $\pi_1(T_1)$ provided $|pqn - 1| > N_{g, c_1} + 1$.  
We continue to write $p_{X,1/n}(g) = t^k w(c_1, c_2) \in \pi_1(X(1/n))$ and 
consider $\pi(p_{X,1/n}(g)) = \pi(w(c_1, c_2)) \in \pi_1(X(1/n))/\langle t \rangle$. 

Since $|p_i| < N_{g, c_1} + 1 < |pqn - 1|$, both $c_1$ and $c_2$ appear in 
$\pi(w(c_1, c_2)) \in \mathbb{Z}_q \ast \mathbb{Z}_{|pqn-1|}$, 
thus $p_{X,1/n}(g)$ is non-trivial and not contained in $\pi_1(T)$. 

It remains to see that $p_{X,1/n}(g) \not\in \pi_1(T_1)$. 
Suppose for a contradiction that $p_{X,1/n}(g) \in \pi_1(T_1)$. 
Then $\pi(w(c_1, c_2))$ is a power of $c_1c_2$ in $\pi_1(X(1/n)) / \langle t \rangle$. 
Recall that $w(c_1, c_2)$ is not a power of $c_1c_2$ in $\pi_1(X)$. 
So  this may occur only if $w(c_1, c_2) = c_1^{-N}c_2 c_1^{-N}c_2 \cdots c_1^{-N}c_2$ or 
$c_2^{-1}c_1^N c_2^{-1}c_1^N \cdots c_2^{-1}c_1^N$ and $|pqn-1| = N + 1$. 
In the former case $\pi(w(c_1, c_2)) = c_1c_2 \cdots c_1c_2 \in \mathbb{Z}_q \ast \mathbb{Z}_{|pqn-1|}$, 
and in the latter case $\pi(w(c_1, c_2)) = c_2^{-1}c_1^{-1}  \cdots c_2^{-1}c_1^{-1}  \in \mathbb{Z}_q \ast \mathbb{Z}_{|pqn-1|}$, 
which are powers of $c_1c_2$. 
However, since $n$ satisfies $|pqn-1| > N_{g, c_1} + 1$, this is impossible.  
Let us take a constant $N_g >0$ so that if $n \ge N_g$, then $|pqn-1| > |a|$ and $|pqn-1| > N_{g, c_1}$. 
Then $N_g$ is a desired constant. 
\end{proof}

\subsection{Dehn filling of a torus knot space}
\label{torus_knot}

\begin{proposition}
\label{torus _knot_fully}
Let $K$ be a non-trivial torus knot. 
Let $g$ be any non-trivial element of $G(K)$. 
Then there exists a constant $N_g > 0$ such that $p_{1/n}(g) \ne 1$ in $\pi_1(K(1/n))$ for all $n \ge N_g$. 
\end{proposition}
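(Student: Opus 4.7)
The plan is to extend the Seifert-fibered analysis of Propositions~\ref{filling_X_composing} and~\ref{filling_X_cable} to the torus knot exterior itself. Writing $K = T_{p,q}$, I would start from the presentation
\[
G(K) = \langle x, y \mid x^p = y^q \rangle,
\]
let $t = x^p = y^q$ generate the infinite cyclic center $Z(G(K))$, and note that $G(K)/\langle t \rangle \cong \mathbb{Z}_p * \mathbb{Z}_q$. Writing the meridian as $\mu = x^a y^b$ with $qa + pb = \pm 1$ and the preferred longitude as $\lambda = \mu^{-pq} t$, the $1/n$--Dehn filling relation $\mu \lambda^n = 1$ becomes $t^n \mu^{1-pqn} = 1$. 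Since $t$ remains central in $\pi_1(K(1/n))$, passing to the quotient by $\langle t \rangle$ produces
\[
\pi_1(K(1/n))/\langle t \rangle \;=\; \langle x, y \mid x^p = y^q = \mu^{|1-pqn|} = 1 \rangle \;=\; \Delta(p, q, |1-pqn|),
\]
so that $K(1/n)$ is Seifert fibered over the $2$--orbifold $S^2(p, q, |1-pqn|)$.

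Next, using centrality of $t$, I would decompose any non-trivial $g \in G(K)$ as $g = t^k \cdot w(x,y)$ where $w$ is either trivial or a non-trivial reduced word in $\mathbb{Z}_p * \mathbb{Z}_q$, and split the argument into two cases. If $w = 1$, then $g = t^k$ with $k \neq 0$; provided $n$ is large enough that $1/p + 1/q + 1/|1-pqn| < 1$, the base orbifold is hyperbolic and $\pi_1(K(1/n))$ is a Seifert-fibered group over a hyperbolic $2$--orbifold, whose center is infinite cyclic generated by the image of $t$, so $t^k \neq 1$. If $w \neq 1$, it suffices to show that $w$ remains non-trivial in $\Delta(p, q, |1-pqn|)$ for $n$ sufficiently large.

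For this last step I would invoke small cancellation theory in the free product $\mathbb{Z}_p * \mathbb{Z}_q$: for $n$ large, the single relator $\mu^{|1-pqn|}$ is a high power of a fixed cyclically reduced word $\mu$, so it satisfies an arbitrarily strong metric small cancellation condition $C'(\lambda)$ in $\mathbb{Z}_p * \mathbb{Z}_q$. By Greendlinger's lemma applied to this single-relator quotient of a free product, any element in the normal closure of $\mu^{|1-pqn|}$ must contain a cyclic subword of $\mu^{\pm|1-pqn|}$ of length greater than $(1-3\lambda)|1-pqn|\cdot\ell(\mu)$, and is therefore long. Since $w$ has fixed reduced length, this excludes $w$ from the normal closure once $n$ exceeds a bound depending only on the length of $w$.

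The main obstacle is the small-cancellation step: one must verify that $\mu^{|1-pqn|}$ really satisfies the required metric condition in $\mathbb{Z}_p * \mathbb{Z}_q$ and extract an explicit $N_g$ from Greendlinger's lemma in terms of the word length of $w$ (equivalently, of $g$). The case $w = 1$ is conceptually lighter, amounting to identifying the Seifert invariants of $K(1/n)$ and quoting the standard fact that in a Seifert-fibered group with hyperbolic base orbifold and non-zero rational Euler number, the regular fiber generates an infinite cyclic center; for $1/n$--surgery on a non-trivial torus knot, $K(1/n)$ is an integral homology sphere distinct from $S^3$, which guarantees the latter.
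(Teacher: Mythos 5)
Your proposal is correct and follows essentially the same route as the paper: both split off the central fiber via the extension $1 \to \langle t \rangle \to G(T_{p,q}) \to \mathbb{Z}_p \ast \mathbb{Z}_q \to 1$, dispose of the case $g = t^k$ by the infiniteness of the filled Seifert-fibered group, and kill the remaining case by observing that the filling relator (a power $npq-1$ of a length-two peripheral word in $\mathbb{Z}_p \ast \mathbb{Z}_q$) satisfies $C'(1/6)$ over the free product, so Greendlinger's lemma excludes any word of bounded normal-form length from its normal closure. The only differences are cosmetic choices of presentation ($\langle x,y \mid x^p = y^q\rangle$ with relator $\bar\mu^{|1-pqn|}$ versus the Seifert presentation with relator $(c_1c_2)^{pqn-1}$), and the "main obstacle" you flag is exactly what the paper resolves by citing Greendlinger's lemma for free products \cite[V. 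Theorem 9.3]{Lyndon-Schupp}.
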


\begin{proof} 
Let $K$ be the torus knot $T_{p, q}$, 
where we assume $2 \le p < q$. The case where $q$ is negative is treated similarly. 
The exterior $E(T_{p,q})$ is a Seifert fiber space over $D^{2}$ with two exceptional fibers of indices $2\le p <q$, and its $1/n$-filling $T_{p, q}(1/n)$ is a Seifert fiber space over $S^2$ with three exceptional fibers of indices $2 \le p < q < pqn -1$.  

Using Seifert structures, 
we have the following presentation of $\pi_1(E(T_{p, q}))$: 
\[
\langle c_1, c_2, t \mid [c_1, t]= [c_2, t] = 1, c_1^p t^\alpha = c_2^q t^{\beta} = 1 \rangle,  
\]
for some integers $\alpha$ and $\beta$, where $t = \mu^{pq} \lambda \in \pi_1(\partial E(K))$ is represented by a regular fiber and 
$c_1c_2 \in \pi_1(\partial E(K))$. 
Similarly, 
we obtain a presentation of $\pi_1(K(r))$ by adding an extra relation corresponding to $1/n$--Dehn filling. 
\[
\langle c_1, c_2, t \mid [c_1, t]= [c_2, t] = 1, c_1^p t^\alpha = c_2^q t^{\beta} = (c_1c_2)^{pqn-1} t^{\gamma} = 1 \rangle. 
\]
for some integer $\gamma$. 
The last relation arises from $1/n$--Dehn filling. 

If $n>1$, then $pqn-1\geq 11>6$, so $\frac{1}{p}+\frac{1}{q}+\frac{1}{pqn-1}<1$.
Then $\pi_1(T_{p, q}(1/n))$ is infinite. 
 In particular, an element $p_{1/n}(t) \in \pi_1(T_{p, q}(1/n))$, 
denoted by the same symbol $t$, 
represented by the image of a regular fiber has infinite order. 

In the following, we always assume that $n>1$ so that we have the following commutative diagram. 
\[ 
\xymatrix{ 
1 \ar[r]& \mathbb{Z}=\langle t \rangle \ar[r] \ar[d]_{p_{1/n}}^{\cong}& G(T_{p,q}) \ar[r]^-{f} \ar[d]_{p_{1/n}} & \mathbb{Z}_p \ast \mathbb{Z}_q =\langle c_1, c_2 \: | \: c_1^p=c_2^q = 1 \rangle \ar[r] \ar[d]^{P_{1/n}} & 1 \\
1 \ar[r]& \mathbb{Z}=\langle p_{1/n}(t) \rangle \ar[r] & \pi_1(T_{p, q}(1/n))\ar[r]^-{f_n} & \mathbb{Z}_p \ast \mathbb{Z}_q \slash \langle \!\langle (c_1c_2)^{pqn-1} \rangle \! \rangle \ar[r]  & 1, \\
}
\]
where $f \colon G(T_{p, q}) \to G(T_{p, q})/ \langle t \rangle$ and 
$f_n \colon  \pi_1(T_{p, q}(1/n)) \to \pi_1(T_{p, q}(1/n) / \langle t \rangle$ denote the natural projections. 

It follows that for $g\in G(T_{p,q})$, if $f(g)=1$, then $g=t^{m}$ for some $m\neq 0$. Consequently, $p_{1/n}(g) = p_{1/n}(t)^m \neq 1$ for all $n>1$.

Thus we may assume that $f(g) \neq 1$.
By the commutative diagram above, to show $p_{1/n}(g) \neq 1$, it is sufficient to show that $P_{1/n}(f(g)) \neq 1$, which is done in the following lemma.

\begin{lemma}
For $w \in \mathbb{Z}_{p} \ast \mathbb{Z}_{q}$, let $\ell=\ell(w)$ be the length of the normal form $w$. 
Namely, $w = x_1x_2\cdots x_{\ell}$, 
where each $x_i\ne 1$, each $x_i$ is in one of the factors, $\mathbb{Z}_p$ or $\mathbb{Z}_q$, and
successive $x_i$ and $x_{i+1}$ are not in the same factor.
If we take $n$ so that $npq-1 > \ell(w)$, then $P_{1/n}(w) \neq 1$.
\end{lemma}

\begin{proof}
The assertion follows from Small Cancellation Theory. 
Since $npq-1>6$, 
the relator $(c_1c_2)^{npq-1}$ of the free product $\mathbb{Z}_p \ast \mathbb{Z}_q$ satisfies the $C'(\frac{1}{6})$ condition of the small cancellation over the free product. 
Therefore by Greendlinger's lemma for small cancellation over the free product \cite[V. Theorem 9.3]{Lyndon-Schupp},  
if $P_{1/n}(w)=1$, 
then the normal form of $w$ must contain one of 
$(c_1c_2)^{L}$, $(c_2c_1)^{L}, (c_1^{-1}c_2^{-1})^{L}, (c_2^{-1}c_1^{-1})^{L}$ for $L > \frac{1}{2}(npq-1)$. 
When $npq-1 > \ell$, this is impossible, hence $P_{1/n}(w) \neq 1$.
\end{proof}

Therefore by lemma, by taking $N_g=\max\{2, \frac{\ell(f(g))+2}{pq} \}$, $p_{1/n}(g) \neq 1$ whenever $n \geq N_g$.
\end{proof}

\medskip

\section{Fundamental group of graph of groups}
\label{sec:graph of groups}

In this section, 
we will review the fundamental group of graph of groups \cite{Serre}.
For our purpose we will restrict our attention to a special case which appears as the knot group $G(K) = \pi_1(E(K))$ or $\pi_1(K(r))$. 

\subsection{Before Dehn fillings}
\label{graph_before}

Let us put $G_{v} = \pi_1(X) \subset G(K)$ and $G_{v_j} = \pi_1(E_j) \subset G(K)$, each of which is called a {\em vertex group}. 
Similarly put $G_{e_i} = \pi_1(T_i) \subset G(K)$, which we call an {\em edge group}. 

Let $Y_{E(K)}$ be a graph consisting of $n+1$ vertices $v, v_1, \dots, v_n$,  
together with oriented edges $e_1, \dots, e_n$; 
$e_i$ connects $v$ and $v_i$ directed from $v$ to $v_i$. 
By definition, $Y_{E(K)}$ is just a tree as depicted by Figure~\ref{graph_groups}. 

Denote by $o(e_i)$ the vertex $v$ which is called the {\em origin} of the oriented edge $e_i$, and 
denote by $t(e_i)$ the vertex $v_i$ which is called the {\em terminus} of $e_i$. 
Denoting $\overline{e_i}$ the edge $e_i$ with opposite orientation,  
$o(\overline{e_i}) = t(e_i)$ and $t(\overline{e_i}) = o(e_i)$. 

To the distinguished vertex $v$, assign a vertex group $G_{v} = \pi_1(X)$, 
and to each $v_i$ assign a vertex group $G_{v_i} = \pi_1(E_i)$ for $1 \le i \le n$. 
The middle of $e_i$ corresponds to an edge group $G_{e_i}= \pi_1(T_i)$. 
Note that $G_{e_i} \subset G_{v_i}$ and $G_{\overline{e_i}} \subset G_{v}$. 
Then the tree $Y_{E(K)}$, together with a family of groups $G = \{ G_v,\ G_{v_i},\ G_{e_i}\}$, 
gives a \textit{graph of groups} $(G, Y_{E(K)})$.

\begin{figure}[htb]
\centering
\includegraphics[width=0.25\textwidth]{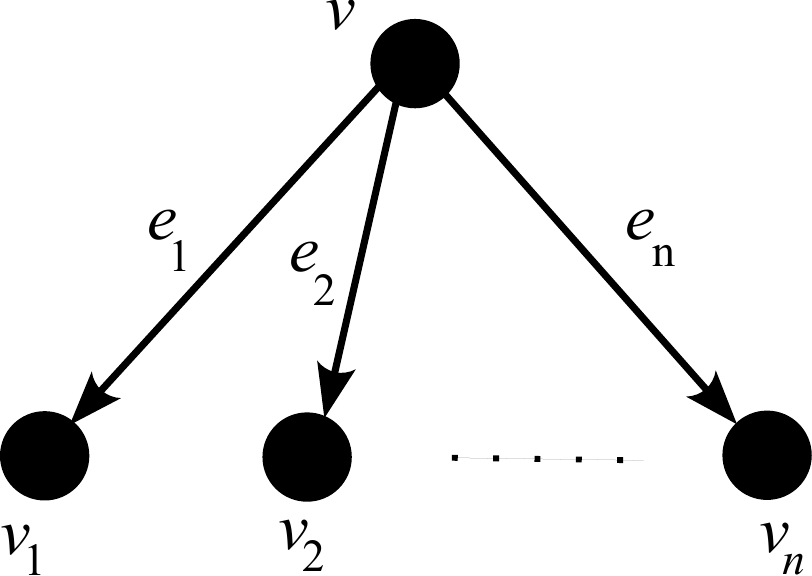}
\caption{Graph of groups.} 
\label{graph_groups}
\end{figure}

Now let us introduce a graphical word of the graph of groups $(G, Y_{E(K)})$.  
Let $c$ be an oriented closed path in $Y_{E(K)}$ based at the vertex $v$, 
which is expressed by a sequence of the oriented edges $(y_1, \dots, y_{\ell})$; 
each $y_i$ is one of $e_1, \overline{e_1}, \dots, e_n, \overline{e_n}$ such that 
$o(y_{i+1}) = t(y_i)$ for each $i$. 
Then a \textit{graphical word} is a sequence 
\[
x_0y_1 x_1y_2 \cdots y_i x_i y_{i+1} \cdots y_{\ell}  x_{\ell},
\]
$x_0, x_1, \dots, x_{\ell}$ of $G_v$ or $G_{v_i} (1 \le i \le n)$ such that 
$x_0, x_{\ell} \in G_v$, and $x_i \in G_{t(y_i)} = G_{o(y_{i+1})}$. 
Note that $x_i$ may be the trivial element $1$.  

Since $Y_{E(K)}$ is a tree, 
for the graph (tree) of groups $(G, Y_{E(K)})$,  
putting $y_j =1$ in the above graphical word, we obtain an element $x_0\cdots x_{\ell}$ in \textit{the fundamental group of $(G, Y_{E(K)})$}, which is $\pi_1(E(K))$, 
the amalgams of the vertex groups along the edge groups; 
see \cite[pp.42--43]{Serre}.

For a non-trivial element $g \in G(K)$ with minimal expression $g = g_1\cdots g_m$, 
we may assign a graphical word in the following manner. 

We start from the word $1 g_1 g_2 \cdots g_m$ if $g_1 \not \in \pi_1(X)=G_{v}$ and $g_1 g_2\cdots g_m$ if $g_1 \in \pi_1(X)=G_v$. 
Here the first symbol $1$ represents the trivial element of $G_v = \pi_1(X)$.
For each $i=1,\ldots,m$, we replace each $g_i$ with sequence according to the following rule.

If $g_i \in \pi_1(X)=G_{v}$ then we remain $g_i$ unchanged. 
We emphasize that $g_i \in \pi_1(E_j)-\pi_1(X) = \pi_1(E_j) - \pi_1(T_j)$ in the cases where modifications are needed. 
Assume that $g_i \in \pi_1(E_{j(i)})= G_{v_{j(i)}}$ for some terminus vertex $v_{j(i)}$.
If $g_{i+1} \in \pi_1(X)=G_v$, then we replace $g_i$ with $e_{j(i)}g_i \overline{e_{j(i)}}$.
If $g_{i+1} \in \pi_1(E_{j(i+1)})$, or, $i=m$, then then we replace $g_i$ with $e_{j(i)}g_i \overline{e_{j(i)}}1$, where the last $1$ denotes the trivial element of $G_v$.

The result is the alternating sequence 
\[ 
x_0y_1 x_1y_2 \cdots y_i x_i y_{i+1} \cdots y_{m}  x_{m+1} 
\]
consisting of elements $x_0, x_1, \dots, x_{m+1}$ of $G_v$ or $G_{v_i} (1 \le i \le n)$ and the sequence of oriented edges $(y_1, \dots, y_m)$ which forms a closed oriented path $c$ based at $v$. 

We call $x_0y_1 x_1y_2 \cdots y_i x_i y_{i+1} \cdots y_{m}  x_{m+1}$ the {\em graphical word associated to $g$} and denote it by $|c, g|$. 

The example below illustrates how we obtain a graphical word from an element with minimal expression. 

\begin{example}
\label{word_sequence}
Assume that $G(K) = \pi_1(E(K))$ consists of four vertices $v, v_1, v_2, v_3$ and three edges; see Figure~\ref{tree}. 
Let $g = g_1g_2g_3g_4$, whee $g_1 \in G_{v_2}= \pi_1(E_2), g_2 \in G_{v_1}=\pi_1(E_1), g_3 \in G_v = \pi_1(X)$ and $g_4 \in G_{v_2}= \pi_1(E_2)$. 
Then the graphical word of $g$ is given by  
\[
x_0 y_1 x_1 y_2  x_2 y_3 x_3 y_4 x_4 y_5 x_5 y_6 x_6  = 1 e_2 g_1 \overline{e_2} 1 e_1 g_2 \overline{e_1} g_3 e_2 g_4 \overline{e_2} 1.
\] 

\begin{figure}[htb]
\centering
\includegraphics[width=0.85\textwidth]{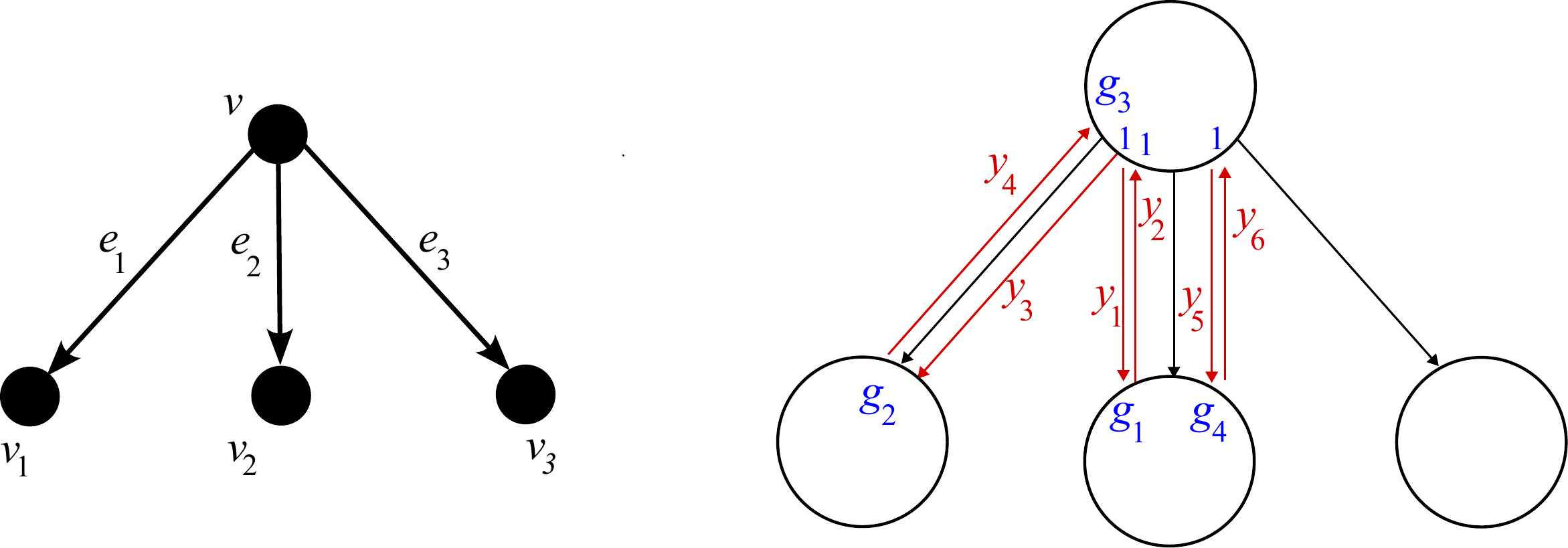}
\caption{A graph of groups $(G, Y_{E(K)})$ and an oriented closed path in $Y_{E(K)}$.} 
\label{tree}
\end{figure}

\end{example}

\subsection{After Dehn fillings}
\label{graph_after}

In what follows we assume $X(r)$ is boundary-irreducible. 
Then for each component $T_j \subset \partial X(r)$, we may regard that $\pi_1(T_j)$ is a subgroup of $\pi_1(X(r)) \subset \pi_1(K(r))$. 

For simplicity we write $G_{\widehat{v}} = \pi_1(X(r))$ and $G_{v_j} = \pi_1(E_j)$ and $G_{e_i} = \pi_1(T_i)$ as in \ref{graph_before}. 
Let $\widehat{G}$ be a family of groups $\{ G_{\widehat{v}},\ G_{v_j},\ G_{e_i}  \}$.
Then we obtain a graph of groups $(\widehat{G}, Y_{K(r)})$ consisting of $n+1$ vertices $\widehat{v}, v_1, \dots, v_n$,  
together with oriented edges $y_1, \dots, y_n$; 
$y_i$ connects $v$ and $v_i$ directed from $v$ to $v_i$; 
$\overline{y_i}$ denotes $y_i$ with the opposite direction.
Note that the graph $Y_{K(r)}$ is exactly the same as $Y_{E(K)}$ except for the vertex group $G_{\widehat{v}} = \pi_1(X(r))$. 

Recall that $r$--Dehn filling of $E(K)$ induces a natural epimorphism 
$p_r \colon G(K) \to \pi_1(K(r))$. 
Let $g$ be a non-trivial element of $G(K)$ with a minimal expression $g_1 \cdots g_m$ with the associated graphical word 
\[
x_0 y_1 x_1 \cdots y_{i-1} x_{i-1} y_i x_{i} y_{i+1} \cdots x_{\ell}. 
\] 

Then 
$p_r(g)$ is $p_r(g_1) \cdots p_r(g_n) \in \pi_1(K(r))$ with the associated graphical word 
\[
p_r(x_0) y_1 p_r(x_1) \cdots y_{i-1} p_r(x_{i-1}) y_i p_r(x_{i}) y_{i+1} \cdots p_r(x_{\ell}).
\] 

Note that $p_r(g_i)$ remains non-trivial if $g_i \in G_{v_j} = \pi_1(E_j)$, 
while if $g_i \in G_v = \pi_1(X)$, 
then $p_r(g_i) = p_{X, r}(g_i) \in  G_{\hat{v}} = \pi_1(X(r))$ may be trivial, 
and hence 
$p_r(g)$ may be also trivial in $\pi_1(K(r))$ for some $r \in \mathbb{Q}$.

We recall the following definition from \cite{Serre}. 
For our purpose we restrict our attention to the graph $Y$, 
which will be either $Y_{E(K)}$ or $Y_{K(r)}$ described in \ref{graph_before} and \ref{graph_after}; 
$Y = Y_{E(K)}$ has vertex groups $G_v = \pi_1(X)$, $G_{v_j} = \pi_1(E_j)$ and edge groups $G_{y_i} = \pi_1(T_j)$, 
and $Y = Y_{K(r)}$ has vertex groups $G_{\widehat{v}} = \pi_1(X(r))$, $G_{v_j} = \pi_1(E_j)$ and edge groups $G_{y_i} = \pi_1(T_j)$. 
In the following definition, each $x_k$ denotes $g_{k'} \in G(K)$ or $p_r(g_{k'}) \in \pi_1(K(r))$, and $y_k$ is an oriented edge in $Y$. 

\begin{definition}[reduced graphical word]
\label{reduced}
Let us consider a graphical word 
\[
x_0 y_1 x_1 \cdots y_{i-1} x_{i-1} y_i x_{i} y_{i+1} \cdots x_{\ell}. 
\]
Let $G_{y_i}^{t(y_i)}$ denote the image of the edge group $G_{y_i}$ in the vertex group $G_{t(y_i)}$. 
Then we say that $x_0 y_1 x_1 \cdots y_{i-1} x_{i-1} y_i x_{i} y_{i+1} \cdots x_{\ell}$ is {\em reduced} if 
it satisfies the following: 
\begin{enumerate}
\renewcommand{\labelenumi}{(\roman{enumi})}
\item If ${\ell} = 0$, then $x_0 \ne 1$.
\item
If ${\ell} \ge 1$, then $x_i \not\in G_{y_i}^{t(y_i)}$ for each $i$ such that $y_{i+1} = \overline{y_i}$. 
\end{enumerate}

\begin{figure}[htb]
\centering
\includegraphics[width=0.24\textwidth]{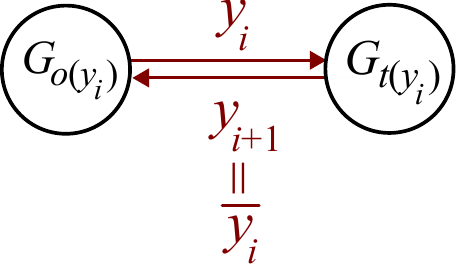}
\caption{backtrack condition: $x_i \not\in G_{y_i}^{t(y_i)} \subset G_{t(y_i)}$ for a backtrack $y_i x_i y_{i+1} = y_i x_i \overline{y_i}$.} 
\label{backtrack}
\end{figure}
\end{definition}

If $y_{i+1} = \overline{y_i}$ and $x_i \in G_{y_i}^{t(y_i)}$, 
then we say that the graphical word $|c, g|$ has an \textit{incomplete backtrack} at $x_i$. 

If $g = g_1 \cdots g_m \in G(K)$ is a minimal expression, then the corresponding graphical word $|c, g|$ is reduced. 

In the proofs of our main results, 
we will use the following theorem due to 
Serre \cite[I.5. Corollary~3]{Serre}. 

\begin{theorem}[\cite{Serre}]
\label{Serre}
Suppose that a graphical word $x_0 y_1 x_1 \cdots y_{i-1} x_{i-1} y_i x_{i} y_{i+1} \cdots x_{\ell}$ is reduced, 
then $x_0  x_1 \cdots  x_{i-1} x_{i} \cdots x_{\ell}$ is non-trivial. 
\end{theorem}

\medskip

\section{Condition for a non-trivial element surviving after Dehn fillings}
\label{proofs}

In what follows we assume that $K$ is a satellite knot, i.e. the outermost decomposing piece $X$ is not $E(K)$. 
In the case where $K$ is non-prime, 
as in Subsection~\ref{outer_piece_composing}, 
we decompose $E(K)$ into $X \cup E_1 \cup E_2$, where $X$ is the $2$--fold composing space, 
instead of the torus decomposition of $E(K)$. 
Let $g$ be a non-trivial element of $G(K)$ and write $g \in G(K)$ in a minimal expression $g_1 \dots g_m$. 
Among $g_1, \dots, g_m$, 
assume that $g_{i_1}, \dots, g_{i_k}$ belong to $\pi_1(X)$; 
possibly some of them belong to $\pi_1(T_j)$ for some $1 \le j \le n$. 

\begin{definition}
\label{g|X}
For a given non-trivial element $g \in G(K)$ with a minimal expression $g= g_1\cdots g_m$ 
for which $g_{i_s} \in \pi_1(X)$ $(s = 1, \dots, k)$,  
we say that a slope $r$ satisfies \textit{Condition $(g|X)$} if the following two conditions hold. 

\begin{enumerate}
\renewcommand{\labelenumi}{(\roman{enumi})}
\item
$X(r)$ is boundary-irreducible.

\item
$p_{X, r}(g_{i_s}) \in \pi_1(X(r))$ is non-trivial, 
and if $g_{i_s} \in \pi_1(X) - \cup_{j=1}^n \pi_1(T_j)$, 
then $p_{X, r}(g_{i_s}) \in \pi_1(X(r))  - \cup_{j=1}^n \pi_1(T_j)$ for $s = 1, \dots, k$. 
\end{enumerate}
\end{definition}

\begin{lemma}
\label{criterion}
Let $g$ be a non-trivial element with a minimal expression $g= g_1\cdots g_m$, in which $g_{i_1}, \dots, g_{i_k}$ belong to $\pi_1(X)$. 
Assume that a slope $r \in \mathbb{Q}$ satisfies Condition  $(g|X)$. 
Then $p_r(g)$ is non-trivial in $\pi_1(K(r))$. 
\end{lemma}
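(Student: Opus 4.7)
The plan is to use the graph-of-groups formalism of Section~\ref{sec:graph of groups} and apply Serre's normal form theorem (Theorem~\ref{Serre}). First I would associate to the minimal expression $g = g_1 \cdots g_m$ the graphical word $(c, g) = x_0 y_1 x_1 \cdots y_{\ell} x_{\ell}$ in $\mathcal{F}(E(K))$. By the minimality conditions (A) and (B), together with the convention regarding peripheral factors $g_i \in \pi_1(T_j)$ and Lemma~\ref{regular_fiber} when $X$ is Seifert fibered, this word is already reduced in the sense of Definition~\ref{reduced}.

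Next I would push this word forward under the Dehn filling map. Condition~$(g|X)$(i) ensures $X(r)$ is boundary-irreducible, so the vertex groups $\pi_1(X(r))$ and each $\pi_1(E_j)$ inject into $\pi_1(K(r))$ and amalgamate along the edge groups $\pi_1(T_j)$, producing a valid graph-of-groups structure $\mathcal{F}(K(r))$. Applying $p_r$, which equals $p_{X,r}$ on $\pi_1(X)$ and is the identity on each $\pi_1(E_j)$, yields the graphical word $(c, p_r(g)) = p_r(x_0) y_1 p_r(x_1) \cdots y_{\ell} p_r(x_{\ell})$ representing $p_r(g)$ in $\mathcal{F}(K(r))$.

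The heart of the argument is to verify that $(c, p_r(g))$ is still reduced, which by Theorem~\ref{Serre} will immediately give $p_r(g) \neq 1$. I would check each backtrack position $y_k x_k \overline{y_k}$ separately. If $x_k = g_i \in \pi_1(E_j)$, the minimality convention forces $g_i \notin \pi_1(T_j)$, and injectivity of $\pi_1(E_j) \hookrightarrow \pi_1(K(r))$ via (i) transfers this to $p_r(g_i) \notin \pi_1(T_j)$. If $x_k = g_i \in \pi_1(X)$, the backtrack has the form $\overline{y_a} g_i y_a$ with neighbors $g_{i \pm 1} \in \pi_1(E_a)$; minimality forces $g_i \notin \pi_1(T_a) \subset \pi_1(X)$, and one needs to deduce $p_{X,r}(g_i) \notin \pi_1(T_a) \subset \pi_1(X(r))$. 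When $g_i \in \pi_1(X) - \bigcup_j \pi_1(T_j)$, this is exactly Condition~$(g|X)$(ii). In the residual case $g_i \in \pi_1(T_j)$ for some $j \neq a$, one invokes the finer statements of Propositions~\ref{filling_X_hyperbolic}--\ref{filling_X_cable}, which place $p_{X,r}(g_i)$ inside the same $\pi_1(T_j)$, combined with the fact that distinct peripheral subgroups of $\pi_1(X(r))$ meet trivially (by malnormality in the hyperbolic case, and by the exclusion of regular fibers via Lemma~\ref{regular_fiber} in the Seifert-fibered case).

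The step I expect to be the main obstacle is the last subcase above: Condition~$(g|X)$ as stated only controls non-peripheral images, so treating the inner-peripheral factors $g_i \in \pi_1(X) \cap \pi_1(T_j)$ requires supplementing it with the more precise information from Section~\ref{Dehn_filling} on where such elements go, plus a careful analysis of how distinct peripheral subgroups of $X(r)$ intersect. Once this is handled, reducedness of $(c, p_r(g))$ follows and Serre's theorem closes the argument.
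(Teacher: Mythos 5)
Your proposal is correct and follows essentially the same route as the paper: associate the reduced graphical word to the minimal expression, push it forward under $p_r$ into the graph of groups for $K(r)$ (using Condition $(g|X)$(i) for boundary-irreducibility), rule out incomplete backtracks of the two types $[v \to v_j \to v]$ and $[v_j \to v \to v_j]$ via injectivity of $\pi_1(E_j)\to\pi_1(K(r))$ and Condition $(g|X)$(ii), and conclude by Serre's theorem (Theorem~\ref{Serre}). The inner-peripheral subcase you flag as the main obstacle is indeed the delicate point; the paper handles it tersely by the same appeal to Condition $(g|X)$ together with the Dehn-filling propositions of Section~\ref{Dehn_filling}, so your supplementary remarks on malnormality and Lemma~\ref{regular_fiber} are consistent with, and slightly more explicit than, the published argument.
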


\begin{proof}
Write the associated graphical word  to $g$ as 
\[
|c, g| = x_0 y_1 x_1 \cdots x_{\ell},
\] 
where $y_i$ is an oriented edge, 
and if $x_i \ne 1$, then it is one of $g_1, \dots, g_m$. 
Note that since $g_1\cdots g_m$ is minimal, 
$g_i \ne 1$, and $g_i$ and $g_{i+1}$ do not belong to the same factor group $\pi_1(X)$, $\pi_1(E_j)$ ($1 \le j \le n$). 
Recall also that $g_i \in \pi_1(E_j)$ means $g_i \in \pi_1(E_j) - \pi_1(T_j)$. 
See Figure~\ref{graphical_word}. 

\begin{figure}[htb]
\centering
\includegraphics[width=0.6\textwidth]{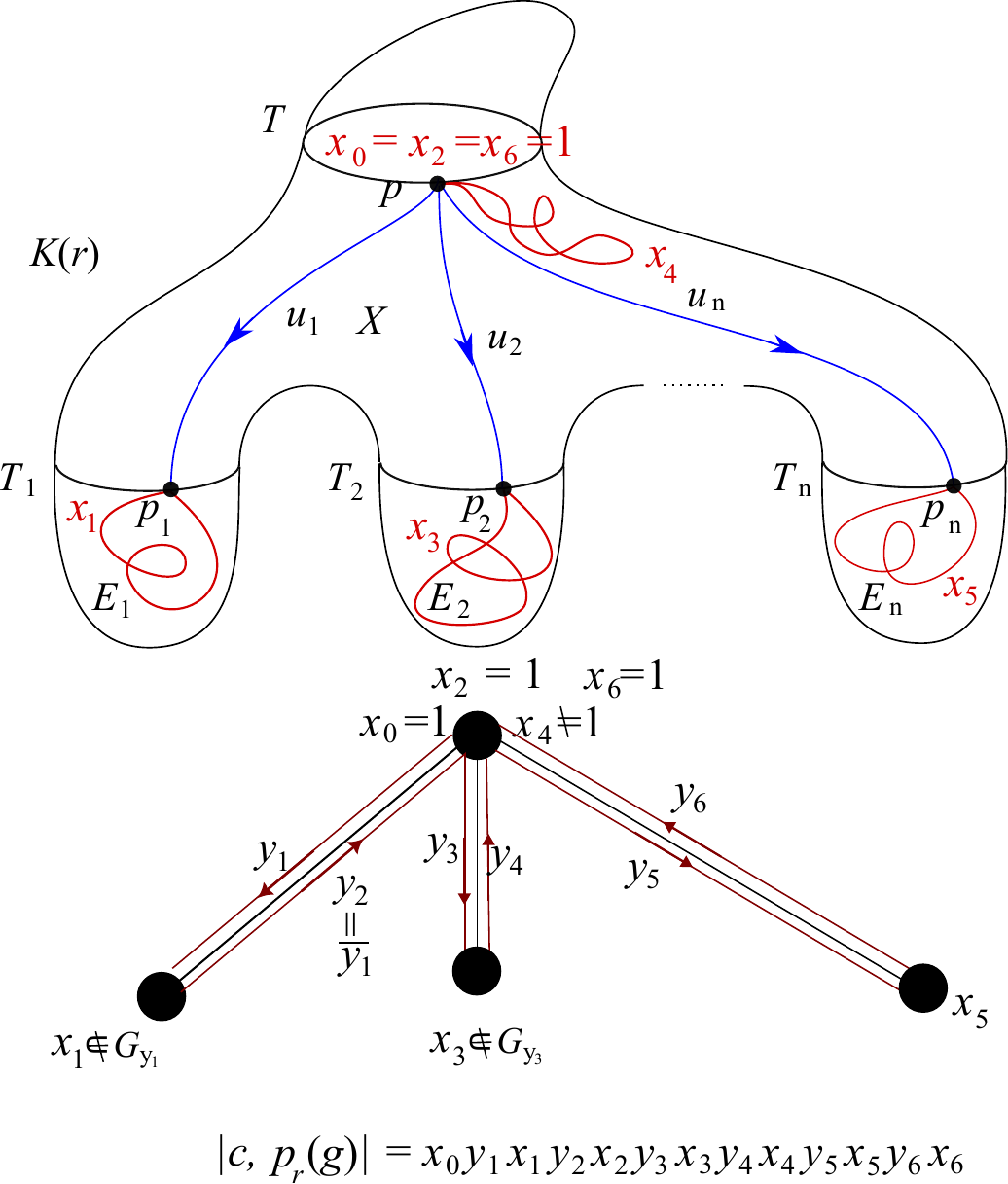}
\caption{$g = g_1g_2g_3g_4$ and its corresponding graphical word 
$|c, g| = x_0 y_1 x_1 y_2 x_2 y_3 x_3 y_4 x_4 y_5 x_5 y_6 x_6$; 
$x_0 = 1, x_1 = g_1, x_2 = 1, x_3 = g_2, x_4 = g_3, x_5 = g_4, x_6 = 1$ and $y_2 = \overline{y_1}, y_4 = \overline{y_3}, y_6 = \overline{y_5}$.}
\label{graphical_word}
\end{figure}

As mentioned before, $|c,g|$ is reduced, because it comes from a minimal expression. 
Then $x_i \in \pi_1(E_j) - \pi_1(T_j)$ if $x_{i-1},\ x_{i+1} \in \pi_1(X)$, 
or $x_i \in \pi_1(X) - \pi_1(T_j)$ if $x_{i-1},\ x_{i+1} \in \pi_1(E_j)$. 

The image of $g$ by the epimorphism $p_r \colon G(K) \to \pi_1(K(r))$ is 
\[
p_r(g) = p_r(g_1) \cdots p_r(g_i) p_r(g_{i+1}) \cdots p_r(g_m).
\] 
The epimorphism $p_r$ sends the graphical word $|c,g|$ to a graphical word
\[
|c, p_r(g)| = p_r(x_0) y_1 p_r(x_1) \cdots y_i p_r(x_i) y_{i+1} p_r(x_{i+1}) \cdots p_r(x_{\ell}).
\]

Under the condition $(g|X)$, an incomplete backtrack in the graphical word $|c, p_r(g)|$ gives rise to an incomplete backtrack in $|c,g|$ and vice versa hence the graphical word $|c, p_r(g)|$ is reduced if and only if $|c,g|$ is reduced. Since we have seen that $|c,g|$ is reduced, this means that $|c,p_r(g)|$ is reduced. 
Hence Theorem~\ref{Serre} shows that $p_r(g)$ is non-trivial in $\pi_1(K(r))$. 
\end{proof}

\medskip

\section{Proofs of Theorems~\ref{prime_non-cable} and \ref{1/n}}
\label{sec:proofs}

\subsection{Proof of Theorem~\ref{prime_non-cable}}
\label{relative_hyperbolic}

For a given non-trivial knot $K$ and $g\in G(K)$, 
define 
\[
\mathcal{S}_K(g) = \{ r \in \mathbb{Q} \mid \textrm{$r$--Dehn filling trivializes $g$}\} \subset \mathbb{Q}.
\]
We will prove that if $K$ is a prime, non-cabled knot, 
then $\mathcal{S}_K(g)$ is finite. 

Suppose that $g$ has a minimal expression $g_1 \cdots g_m$, 
for which $g_{i_1}, \dots, g_{i_k}$ belong to $\pi_1(X)$.  
For each $g_{i_s} \in \pi_1(X)$ $(1 \le s \le k)$,  
following Proposition~\ref{filling_X_hyperbolic}, 
we have a finite subset $\mathcal{S}_X(g_{i_s}) \subset \mathbb{Q}$ such that 
if $r \not\in \mathcal{S}_X(g_{i_s})$, 
then $r$ satisfies conditions (i) and (ii) (for $g_{i_s} \in \pi_1(X)$) in Definition~\ref{g|X}. 
Let us write $\mathcal{B}(g) = \cup_{s=1}^k\mathcal{S}_X(g_{i_s})$, which is also finite. 
Now we show that 
\[
\mathcal{S}_K(g)  \subset \mathcal{B}(g).
\]
Take any $r \in \mathbb{Q}$ not belonging to $\mathcal{B}(g)$. 
Then, by definition,  it enjoys Condition $(g|X)$. 
Hence, by Lemma~\ref{criterion} we see that $p_r(g)$ is non-trivial in $\pi_1(K(r))$. 
This means that $r \not\in \mathcal{S}_K(g)$, and so $\mathcal{S}_K(g)  \subset \mathcal{B}(g)$. 
This completes the implication $(2) \Rightarrow (1)$. 

\medskip

Let us prove the implication $(1) \Rightarrow (2)$. 
To do this we find a non-trivial element in $G(K)$ which can be trivialized by infinitely many Dehn fillings in the case where 
$K$ is a torus knot, a cable of a non-trivial knot or a non-prime knot.

\medskip

\noindent
\textbf{(1) $K$ is a  $(p, q)$--torus knot. }

For $(p, q)$--torus knot $T_{p, q}$, 
the slope $m/n$ with $|pqn - m| = 1$ are cyclic surgery slopes, i.e. $\pi_1(T_{p, q}(m/n))$ is cyclic. 
Then for such slopes $m/n$, 
the epimorphism $G(K) \to \pi_1(K(m/n))$ factors through $G(K) \to G(K)/[G(K), G(K)]$. 
Hence, every element in the commutator subgroup $[G(K), G(K)]$ becomes trivial in $\pi_1(K(m/n))$. 
This means that for each $g \in [G(K), G(K)]$, 
$\mathcal{S}_K(g)$ is infinite. 

\medskip

\noindent
\textbf{(2) $K$ is a  $(p, q)$--cable of a non-trivial knot $(q \ge 2)$. }

Let $X$ be the outermost decomposing piece of $E(K)$, 
which is a cable space, 
a Seifert fiber space over the annulus with an exceptional fiber of index $q$. 
Then for any slope $m/n$ with $|pqn-m| = 1$, 
$X(m/n)$ is a solid torus. 
Thus an epimorphism $\pi_1(X) \to \pi_1(X(m/n))$ factors through $\pi_1(X) \to \pi_1(X)/[\pi_1(X), \pi_1(X)]$. 
This shows that any element $g \in [\pi_1(X), \pi_1(X)]$ becomes trivial in $\pi_1(X(m/n))$, 
in particular, it becomes trivial in $\pi_1(K(m/n))$. 
Hence, for each $g \in [\pi_1(X), \pi_1(X)] \subset [G(K), G(K)]$, 
$\mathcal{S}_K(g)$ is infinite. 

\medskip

\noindent
\textbf{(3) $K$ is a non-prime knot. }

As in Subsection~\ref{outer_piece_composing} we decompose $E(K)$ as $E(K) = X \cup E_1 \cup E_2$ $(i = 1, 2)$, 
where $X$ is a $2$--fold composing space, i.e. $X = [\textrm{disk with two holes}] \times S^1$ and $E_i$ is a non-trivial knot exterior. 
Note that the meridian of $K$ is a regular fiber in $X$. 
For any integral slope $m$, 
$X(m) = S^1 \times S^1 \times [0, 1]$. 
Since $\pi_1(X(m)) = \mathbb{Z} \oplus \mathbb{Z}$, 
an epimorphism $\pi_1(X) \to \pi_1(X(m))$ factors through $\pi_1(X) \to \pi_1(X)/[\pi_1(X), \pi_1(X)]$, 
and any non-trivial element $g \in [\pi_1(X), \pi_1(X)] \subset \pi_1(X) \subset G(K)$ becomes trivial in 
$\pi_1(X(m))$, and hence in $\pi_1(K(m))$. 
Hence, $\mathcal{S}_K(g)$ contains infinitely many slopes for $g \in [\pi_1(X), \pi_1(X)] \subset [G(K), G(K)]$. 

\smallskip

This completes a proof of Theorem~\ref{prime_non-cable}. 
\qed

\medskip

The implication $(2) \Rightarrow(1)$ in Theorem~\ref{prime_non-cable} may be obtained from \cite{GM,Osin}, 
together with \cite[Theorem 7.2.2]{AFW} and \cite{Dahmani}, 
using the hyperbolicity of 3-manifold group relative to the set of parabolic subgroups which come from non-hyperbolic JSJ pieces.

\medskip

\subsection{Proof of Theorem~\ref{1/n}}
\label{non_trivial_knots}

Suppose that $K$ is a prime, non-cabled knot. 
Then the result follows from 
Theorem~\ref{prime_non-cable} ($(2) \Rightarrow (1)$). 

Let us assume that $K$ is a non-trivial knot other than above knots, 
i.e. $K$ is a non-prime knot or a cabled knot. 
In the latter case, we separate into sub-cases depending upon its companion knot is trivial or not. 

Assume first that $K$ is a non-prime knot or a cable of some non-trivial knot. 
Then for each $g_{i_s} \in \pi_1(X)$, 
following Lemmas~\ref{filling_X_composing}, \ref{filling_X_cable}, 
we have a constant $N_{g_{i_s}}$ such that 
if $n \ge N_{g_{i_s}}$, 
then the slope $1/n$ satisfies conditions (i) and (ii) (for $g_{i_s} \in \pi_1(X)$) in Definition~\ref{g|X}. 
Put $N_g = \mathrm{max}\{ N_{g_{i_1}}, \dots, N_{g_{i_k}}\}$. 
Then for any $n \ge N_g$, the slope $1/n$ enjoys Condition $(g|X)$. 
Hence, by Lemma~\ref{criterion} we see that $p_r(g)$ is non-trivial in $\pi_1(K(r))$. 

Finally assume that $K$ is a torus knot. 
Then Lemma~\ref{torus _knot_fully} gives us the desired result. 

Putting $N_F = \mathrm{max}\{ N_g \mid g \in F \}$, 
we finish the  proof of Theorem~\ref{1/n}. 
\qed

\subsection{Remark on hyperbolicity}
\label{hyperbolicity}
Following the proof of Theorem~\ref{1/n}, 
the knot group $G(K)$ is fully residually perfect hyperbolic, toroidal, or Seifert $3$--manifold group according to 
$K$ is a hyperbolic knot, a satellite knot, or a torus knot, respectively. 
If $K$ is a cable of a hyperbolic knot, 
then we have an epimorphism from $G(K)$ to a closed hyperbolic $3$--manifold group. 
However, $G(K)$ cannot be residually closed hyperbolic $3$--manifold group. 
Indeed, we have: 

\begin{claim}
\label{non-residually_hyp}
If $K$ is a torus knot or a satellite knot with non-hyperbolic JSJ-decomposing piece, 
then $G(K)$ cannot be residually closed hyperbolic $3$--manifold group.  
\end{claim}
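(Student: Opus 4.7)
The plan is, in each case, to exhibit a single non-trivial element of $G(K)$ killed by every epimorphism $\varphi\colon G(K)\to H$ with $H$ the fundamental group of a closed hyperbolic $3$-manifold. Two standard facts about such $H$ drive the argument: first, $H$ is torsion-free; second, $H$ is word-hyperbolic, so the centralizer $C_H(h)$ of any $h\ne 1$ is infinite cyclic, and in particular $Z(H)=1$. Thus, when $K$ is a torus knot $T_{p,q}$, the center of $G(K)$ is generated by the regular Seifert fiber $t$, and any such $\varphi$ satisfies $\varphi(t)\in Z(H)=1$; hence $g=t$ detects the failure of being residually closed hyperbolic.

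Now assume $K$ is a satellite knot with a non-hyperbolic JSJ piece $Y$. Then $Y$ is Seifert fibered, $\pi_1(Y)$ embeds into $G(K)$ by essentiality of the JSJ tori, and the regular fiber $t$ generates the center of $\pi_1(Y)$. I will show that $\varphi(\pi_1(Y))$ is cyclic for every epimorphism $\varphi\colon G(K)\to H$ as above; granting this, any commutator $g=[a,b]$ with $a,b\in\pi_1(Y)$ non-commuting does the job. That a suitable $[a,b]$ is non-trivial in $G(K)$ is checked by projecting $\pi_1(Y)/\langle t\rangle$ onto the orbifold group of the base, which is $\mathbb{Z}\ast\mathbb{Z}_q$, $\mathbb{Z}_p\ast\mathbb{Z}_q$, or the free group $F_n$, according as $Y$ is a cable space, a torus-knot-exterior piece, or a composing space.

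To prove cyclicity of $\varphi(\pi_1(Y))$, I split on whether $\varphi(t)=1$. If $\varphi(t)\ne 1$, then $\varphi(\pi_1(Y))\subseteq C_H(\varphi(t))$ since $t$ is central in $\pi_1(Y)$, and the right-hand side is infinite cyclic. If $\varphi(t)=1$ and $Y$ has an exceptional fiber, each Seifert relation $c^k t^\alpha=1$ with $k\ge 2$ gives $\varphi(c)^k=1$, forcing $\varphi(c)=1$ by torsion-freeness of $H$; the remaining generators then yield a cyclic image. The delicate subcase is $Y$ a composing space with $\varphi(t)=1$. Here the classification of JSJ pieces of knot complements in $S^3$ (Budney, Schubert) guarantees that composing spaces appear only as the outermost piece of a composite knot, in which case $t$ coincides with the meridian $\mu$ of $K$. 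But then $\varphi(\mu)=1$ would force $\varphi$ to factor through $G(K)/\langle\!\langle\mu\rangle\!\rangle=\pi_1(S^3)=1$, contradicting $H\ne 1$, so this subcase never arises.

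The main obstacle is precisely this composing-space subcase; it depends on the structural input that composing spaces appear in the JSJ decomposition of a knot complement in $S^3$ only as outermost pieces of composite knots, so that the regular fiber is forced to be the meridian. With that input in hand, everything else is a direct application of the cyclic-centralizer property of torsion-free word-hyperbolic groups together with the centerlessness of $H$.
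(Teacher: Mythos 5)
Your overall strategy is sound and in fact more direct than the paper's: you produce a single commutator in a Seifert fibered JSJ piece that every epimorphism onto a closed hyperbolic $3$--manifold group must kill, using torsion-freeness and the fact that the centralizer of a non-trivial element of such a group is infinite cyclic. The paper instead first upgrades ``residually'' to ``fully residually'' via Lemma~\ref{Magnus_lemma} and Proposition~\ref{fully_residual=residual}, keeps the pair $\{t,g\}$ with $g\in[\pi_1(Y),\pi_1(Y)]$ alive simultaneously, and derives a contradiction from a fixed-point argument in $\mathrm{PSL}_2(\mathbb{C})$; your route avoids that detour. The torus-knot case and the subcases $\varphi(t)\neq 1$ (centralizer argument) and $\varphi(t)=1$ with an exceptional fiber (torsion-freeness kills the exceptional generators) are all correct.

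However, your composing-space subcase rests on a false structural claim. Composing spaces do \emph{not} occur only as outermost JSJ pieces: if, say, $K$ is the Whitehead double of $K_1\# K_2$ with $K_1,K_2$ hyperbolic, then the JSJ decomposition of $E(K)$ is $W\cup C\cup E(K_1)\cup E(K_2)$, where $W$ is the (hyperbolic) pattern piece and $C$ is a $2$--fold composing space sitting strictly inside; moreover $C$ is the \emph{only} non-hyperbolic piece, so you cannot dodge the issue by choosing a different $Y$. For such a $C$ the regular fiber $t$ is the meridian of the companion $K_1\# K_2$, not of $K$, so $\varphi(t)=1$ does not force $\varphi$ to factor through $G(K)/\langle\!\langle\mu\rangle\!\rangle$, and the subcase you dismiss does genuinely arise. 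The repair is short: $t$ is still the meridian of a composite knot $K'$ (either $K$ itself or a companion), and the meridian normally generates $\pi_1(E(K'))$ within itself, so $\varphi(t)=1$ forces $\varphi(\pi_1(E(K')))=1$ and a fortiori $\varphi(\pi_1(C))=1$, which is certainly cyclic. With that substitution your argument closes.
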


\begin{proof}
Assume that $K$ is a torus knot. 
Then $E(K)$ is a Seifert fiber space with a regular fiber $t$; 
we use the same symbol $t$ to denote the element of $G(K)$ represented by the oriented regular fiber. 
Let us choose $F = \{ t \}$.
Note that $t$ is central in $G(K)$. 
Suppose for a contradiction we have an epimorphism $\varphi$ from $G(K)$ to $\pi_1(M)$ for some closed hyperbolic $3$--manifold $M$
with $\varphi(t)\ne 1$.  
Then $\varphi(t)$ is a non-trivial, central element in $\pi_1(M)$, a contradiction. 

Now suppose that $K$ is a satellite knot with non-hyperbolic JSJ-decomposing piece $Y$. 
Assume for a contradiction that $G(K)$ is residually closed hyperbolic $3$--manifold group. 
Then following Proposition~\ref{fully_residual=residual}, 
we may assume $G(K)$ is fully residually closed hyperbolic $3$--manifold group. 
Since $Y$ is non-hyperbolic, it is Seifert fibered. 
Let $t$ be a regular fiber in $Y$; as above we use the same symbol $t$ to denote the element of $\pi_1(Y)$ represented by the oriented regular fiber. 
Write $H = \pi_1(Y)$ and take a non-trivial element $g \in [H, H]$. 
Note that $t \not\in [H, H]$. 
Set $F = \{ t, g \} \in \pi_1(Y) \subset G(K)$. 
By our assumption we have a closed hyperbolic $3$--manifold $M$ and an epimorphism
$\varphi \colon G(K) \to \pi_1(M)$ such that $\varphi(t) \ne 1$ nor  $\varphi(g) \ne 1$. 
Let $\rho$ be a holonomy representation $\pi_1(M) \to \mathrm{PSL}_2(\mathbb{C})$. 
Consider $\rho \circ \varphi \colon G(K) \to \mathrm{PSL}_2(\mathbb{C})$. 

Now we show that $\varphi(H)$ is abelian. 
Take any element $h \in H$. 
Assume that $\varphi(h) \ne 1$, hence $\rho \circ \varphi(h) \ne 1$.  
By the assumption $\varphi(t) \ne 1$, and $\rho \circ \varphi(t) \ne 1$. 
Since $t$ is central in $H$, 
$[t, h] = 1$ and hence $[\rho \circ \varphi(t), \rho \circ \varphi(h)] = 1$ in $\mathrm{PSL}_2(\mathbb{C})$. 
Note that $\pi_1(M)$ is torsion free and $(\rho \circ \varphi(t))^2 \ne 1$. 
Then $[\rho \circ \varphi(t), \rho \circ \varphi(h)] = 1$ if and only if 
$\mathrm{Fix}(\rho \circ \varphi(t)) = \mathrm{Fix}(\rho \circ \varphi(h))$; see \cite[4.3]{Beardon}. 
This means that for any non-trivial elements $\varphi(h)$ and $\varphi(h')$ in $\varphi(H)$, 
$\mathrm{Fix}(\rho \circ \varphi(h)) = \mathrm{Fix}(\rho \circ \varphi(h'))$. 
This then implies that $\varphi(h)$ and $\varphi(h')$ in $\varphi(H)$ commute, 
i.e. $\varphi(H)$ is abelian in $\pi_1(M)$. 
Hence, $\varphi|_{H}$ factors $H \to H/[H,H]\to \pi_1(M)$.
However, this contradicts $\varphi(g)\ne 1$.
\end{proof}

\section*{Acknowledgements}
We would like to thank the referee for careful reading and detailed comments, 
and for pointing out errors in the original proofs and suggesting solutions that have allowed us to correct them and improve the paper.

\bigskip

\end{document}